\documentclass[12pt,a4paper]{amsart}
\usepackage{graphicx, amssymb,mathrsfs}
\usepackage{color}
\usepackage[margin=2.5cm]{geometry}
\usepackage[normalem]{ulem}
\vfuzz2pt 
\hfuzz2pt 
\newtheorem{thm}{Theorem}[section]
\newtheorem{cor}[thm]{Corollary}
\newtheorem{lem}[thm]{Lemma}

\theoremstyle{definition}

\theoremstyle{remark}

\newtheorem*{rem*}{Remark}
\numberwithin{equation}{section}
\newcommand{\norm}[1]{\left\Vert#1\right\Vert}
\newcommand{\abs}[1]{\left\vert#1\right\vert}
\newcommand{\set}[1]{\left\{#1\right\}}

\newcommand{\lpar}{\left(}
\newcommand{\rpar}{\right)}
\newcommand{\Int}{\operatorname{Int}}

\newcommand{\afterhat}{\hat{\phantom{m}}}

\newcommand{\R}{\mathbb{R}}
\newcommand{\C}{\mathbb{C}}
\newcommand{\N}{\mathbb{N}}
\newcommand{\Z}{\mathbb{Z}}
\newcommand{\Hil}{\mathcal{H}}
\newcommand{\Ban}{\mathcal{X}}
\newcommand{\Lin}{\mathcal{L}}

\newcommand{\cL}{\mathcal{L}}

\newcommand{\frk}[1]{\mathfrak{#1}}

\renewcommand{\Im}{\operatorname{Im}}
\renewcommand{\Re}{\operatorname{Re}}

\newcommand{\Strip}{\mathbb{S}}
\newcommand{\Discr}{\mathbb{D}_m}

\newcommand{\bfc}{\mathbf{c}_m}

\newcommand{\rep}{\rho}

\def\be{\begin{equation}}
\def\ee{\end{equation}}
\def\bes{\begin{equation*}}
\def\ees{\end{equation*}}
\def\bea{\begin{equation}\begin{aligned}}
\def\eea{\end{aligned}\end{equation}}
\def\beas{\begin{equation*}\begin{aligned}}
\def\eeas{\end{aligned}\end{equation*}}

\newcommand{\eb}{\mathbf{e}} 
\newcommand{\var}{s} 
\newcommand{\funz}{g}
\newcommand{\funzz}{h}

\begin{document}

\title[Admissibility of Banach representations]{Harish-Chandra's admissibility problem \\ for Banach space representations of $\mathrm{SL}(2,\R)$}%

\author{Francesca Astengo}
\address{Dipartimento di Matematica, Dipartimento di Eccellenza 2023--2027, Universit\`a di Genova, Via Dodecaneso 35, 16146 Genova, Italy} 
\email{{\tt francesca.astengo@unige.it}}

\author{Michael G. Cowling}
\thanks{The second author was supported by the Australian Research Council, Grant DP220100285.
He also thanks the Universities of Genova and Milano-Bicocca for their hospitality.}
\address{School of Mathematics and Statistics, University of New South Wales, Sydney  NSW 2052, Australia } 
\email{{\tt m.cowling@unsw.edu.au}}

\author{Bianca Di Blasio}
\thanks{}
\address{Dipartimento di Matematica e Applicazioni, Universit\`a di Milano-Bicocca, Via Cozzi 53, 20125 Milano, Italy } 
\email{{\tt bianca.diblasio@unimib.it}}
\subjclass{Primary: 22E46; Secondary:  22E45,  22E47}%
\keywords{}%


\begin{abstract}{
We show that irreducible strongly continuous representations of $\mathrm{SL}(2,\mathbb{R})$ on certain Banach spaces are admissible and that the admissibility of Banach space representations of SL(2,R) and the invariant subspace problem are intimately related.}
\end{abstract}

\maketitle
\section{Introduction}

Throughout this paper, $G$ stands for the noncompact semisimple Lie group $\mathrm{SL}(2,\R)$ and $K$ for its compact subgroup $\mathrm{SO}(2)$; we write $k_\theta$ for the usual rotation matrix.
We define a \emph{representation} $\rep$ of $G$ on a Banach space $\Ban$ to be a homomorphism of $G$ into the space $\Lin(\Ban)$ of bounded operators on $\Ban$ with bounded inverse; unless otherwise stated, we assume that $\rep$ is continuous for the strong operator topology on $\Lin(\Ban)$. 
We define $\rep$ to be \emph{irreducible} if $\Ban$ contains no nontrivial \emph{closed} $\rep(G)$-invariant subspaces (this is usually called topologically irreducible).
By standard Fourier series arguments, we may write $\Ban$ as a sum of closed subspaces $\Ban_m$ on which $\rep(k_\theta)$ acts by multiplying by $e^{im\theta}$.
The  irreducible representation $\rep$ of $G$ on the Banach space $\Ban$ is said to be \emph{admissible} if $\dim(\Ban_m) \leq 1$ for all $m$.

For us, \emph{all} Banach spaces are  over the complex field.
We say that a Banach space $\Ban$ has the invariant subspace property (ISP) if $\dim(\Ban) \leq 1$ or if every bounded operator on $\Ban$ has a nontrivial closed invariant subspace.

We are going to prove the following result.

\begin{thm}\label{thm:main}
Let $\rep$ be an irreducible representation of $G$ on a Banach space $\Ban$.
Assume that one of the nontrivial Banach subspaces $\Ban_m$ has the invariant subspace property.
Then all the Banach subspaces $\Ban_m$ have
 the invariant subspace property, and $\rep$  is admissible.
\end{thm}

\begin{cor}\label{cor:Hilbert}
Suppose that Hilbert spaces have the invariant subspace property.
Then every irreducible representation of $G$ on a Hilbert space is admissible.
\end{cor}
\color{black}

We give the proof towards the end of this note.
Our analysis focusses on the Banach algebra $L^1(G,\omega)_m$ of functions $f$ with the invariance and integrability properties
\begin{equation*}
f(k_\theta x k_\phi) = e^{-i m(\theta+\phi)} f(x)
\qquad\text{and}\qquad
\int_G \norm{x}^\alpha \abs{f(x)}  \,dx < \infty
\end{equation*}
(the notation will be explained in Section 2 below, together with other background material).
In Section 3 we review the theory of $m$-spherical functions on $G$ and the associated spherical transformation; our main contribution
is a theorem that ensures that functions with the $K$-equivariance property above and whose $m$-spherical transforms behave well belong to $L^1(G,\omega)_m$.
Section 4 uses spectral theory and the results of Sections 2 and 3 to show that there exists $f_m \in L^1(G,\omega)_{m}$ that generates all $L^1(G,\omega)_{m}$; this key result is in the spirit of work of R. Gangolli~\cite{Gan}. 
In Section 5, we prove the main theorem.
First we use the function $f_m$ of Section 4 and the invariant subspace property to prove that $\dim(\Ban_m) = 1$ and then we show that $\dim(\Ban_n) \leq 1$ for all $n$, that is, $\rep$ is admissible.

In the rest of this introduction, we explain the significance of Theorem 1.1.
In the early period of the development of the representation theory of semisimple Lie groups, between 1945 and 1975, say, representations on Hilbert spaces and on Banach spaces were considered; see, for instance, \cite[Chap 4]{War}.
It was shown that irreducible \emph{unitary} representations on Hilbert spaces are admissible, which implies in particular that they may be effectively studied and classified by passing to the Lie algebra of the group in question, but the situation with Banach representations was unclear, though it was known that there were some pathological examples of reducible Banach representations (see below for an example).
In 1988, W. Soergel \cite{Soergel88} finally constructed an example of an irreducible representation of $G$ on a Banach space that was not admissible.

Soergel's construction used the resolution of the invariant subspace problem.
This problem, whether every nontrivial operator on a complex Banach space of dimension at least $2$ admits a nontrivial invariant subspace, was first resolved in the negative by P. Enflo \cite{Enflo1}, and later shorter solutions were produced by C.J. Read \cite{Read1, Read2}.
All the examples produced were operators on Banach spaces that are rather different from Hilbert spaces, so it was not clear whether Soergel's construction could be applied to Hilbert space representations or not.
What was clear, however, was that if $T$ is a bounded linear operator on a Banach space $\Ban$ with no nontrivial invariant subspaces, then nonadmissible irreducible representations can be constructed on the Banach space $C(G/P,\Ban)$; here $P$ is a minimal parabolic subgroup of $G$.
Hence while we have a relatively simple classification of all the irreducible unitary representations, for general Banach space representations, a similar classification is probably impossible.

The invariant subspace problem has been tormenting functional analysts for a long time.
All finite dimensional Banach spaces and all nonseparable Banach spaces have the ISP,  and in 2011, Argyros and Hayden \cite{AH} gave an example of an infinite dimensional separable  Banach space with the ISP, so our theorem is not content-free.
Recent  preprints by P. Enflo \cite{Enflo2} and by C. W. Neville \cite{Neville} claim that Hilbert spaces have the ISP.
We do not propose to confirm the correctness of these papers here.
Rather, the aim of this note is to show that a positive solution to the problem gives rise to admissible irreducible representations of $G$ on Banach spaces.
Our arguments, coupled with those of Soergel \cite{Soergel88}, imply that the admissibility of representations of $\mathrm{SL}(2,\R)$ and  the invariant subspace problem are intimately related.
It is hardly surprising that Harish-Chandra gave up treating representations on Banach spaces.
\color{black}

Together with our previous paper \cite{ACD}, our result shows that if Hilbert spaces have the ISP, then the irreducible uniformly bounded representations of $G$ on Hilbert spaces may be classified, namely, every such representation is equivalent (via conjugation by a bounded operator with bounded inverse) to one of the irreducible uniformly bounded representations described by R. A. Kunze and E. M. Stein \cite{KS60}, and the Kunze--Stein representations are minimal representatives of the equivalence classes.

\section{Definitions and preliminary results}\label{sec:def-prelim}

The early work on $\mathrm{SL}(2,\R)$ is due to V. Bargmann \cite{B}. 
L. Ehrenpreis and F. Mautner \cite{EMI, EMII, EMIII} used Bargmann's work later to study analysis on $\mathrm{SL}(2,\R)$ in detail.
Most of what we write in this section would have been known to these authors.

\subsection*{The group $G$}
Recall that $G = \mathrm{SL}(2,\mathbb{R})$ and $K = \mathrm{SO}(2)$.
The Iwasawa decomposition is that every $x$ in $G$ may be written uniquely in the form $n_t a_r k_\theta$, where $r, t\in \R$ and $\theta \in \R \mod 2\pi$; here
\[
\begin{gathered}
k_\theta := \begin{pmatrix} \cos\theta & \sin\theta \\ -\sin\theta & \cos\theta \end{pmatrix} = \exp\lpar \theta \begin{pmatrix} 0 & 1 \\ -1 & 0 \end{pmatrix} \rpar
\\
a_r := \begin{pmatrix} e^r &0 \\ 0  & e^{-r} \end{pmatrix} 
= \exp\lpar r \begin{pmatrix} 1 & 0 \\ 0 & -1\end{pmatrix} \rpar
\\
n_t := \begin{pmatrix} 1 & 0 \\ t & 1 \end{pmatrix} = \exp\lpar t \begin{pmatrix} 0 & 0\\ 1 & 0 \end{pmatrix} \rpar \ .
\end{gathered}
\]
According to the Cartan decomposition, every element of $G$ may be written in the form $k_\theta a_r k_\phi$, where $r \geq 0$; if $x \in K$ then $r = 0$ and there are many choices for $k_\theta$ and $k_\phi$; otherwise, the decomposition is unique up to changes in $\theta$ and $\phi$ by adding (or subtracting) $\pi$ to both or $2\pi$ to either.
In these coordinates, the Haar measure on $G$ is given by
\begin{equation*}
dx=2 \sinh(2r) \,dr \,d\varphi \,d\psi.
\end{equation*}
Clearly
\[
2 \sinh(2r)\sim
\begin{cases}4r & \text{as $r\to 0$}
\\[3pt]
e^{2r} & \text{as $r\to \infty$}.
\end{cases}
\]

The  Lie algebra $\frk{g}$ of $G$ is spanned  by the elements $E$, $H$ and $Q$, given by
\begin{equation*} 
E   = \begin{pmatrix}
        0 & 1 \\
        -1 & 0
      \end{pmatrix},
\qquad      
H = \begin{pmatrix}
        1 & 0 \\
        0 & -1 
      \end{pmatrix},
\qquad
Q = \begin{pmatrix}
        0 & 1 \\
        1 & 0 
      \end{pmatrix}   ;
\end{equation*}
we define the Casimir operator $\Omega$ by
\[
4\Omega :=H^2+Q^2-E^2.
\]

\subsection*{Representations of $G$}
Throughout, we use the word \emph{representation} to mean a representation $\rho$ of $G$ on a topological vector space $\mathcal{V}$ by continuous operators whose inverse is also continuous, which, unless otherwise stated,
 is strongly continuous, in the sense that $x_n \to x$ in $G$ implies that $\rep(x_n) \xi \to \rep(x)\xi$ in $\mathcal{V}$ for all $\xi \in \mathcal{V}$.
If $\mathcal{V}$ is a Hilbert space or a Banach space, we call $\rho$ a Hilbert or a Banach representation.
For $x \in G$, we write $\|x\|$ for the norm of $x$ as a linear operator on the euclidean space $\R^2$.
It is evident that $\|x\| = \|x^{-1}\|$ and $\| x_1 x_2\| \leq \| x_1 \| \| x_2\|$ for all $x, x_1, x_2$ in $G$, and that if $x$ has Cartan decomposition $k a_r k'$, then $\|x\| = \|a_r\|= e^r$.
Norms in other spaces are indicated by a subscript, for example, $\norm{\rho(x)}_{\mathcal{L}(\Ban)}$ indicates the norm of $\rho(x)$ in the Banach space of bounded linear operators on the Banach space $\Ban$.

\begin{lem}\label{lem:mod-growth}
Suppose that $\rho$ is a representation of $G$ on a Banach space $\Ban$.
Then there exist constants $\alpha$ in $[0,\infty)$, called the growth rate of $\rep$, and $C$ in $[1,\infty)$ such that
\be\label{eq:mod-growth}
\| \rep(x) \|_{\mathcal{L}(\mathcal{X})} \leq C \|x\|^\alpha
\qquad\forall x \in G.
\ee
\end{lem}

\begin{proof}
Take $\xi$ in $\Ban$. 
Since $x \mapsto \| \rho(x) \xi \|_{\Ban}$ is continuous on $G$, it is bounded on compact sets.
The Banach--Steinhaus theorem implies that $\| \rho(x) \|_{\mathcal{L}(\mathcal{X})}$ is bounded on compact sets in $G$.

If $j \in \N^+$ and $e^{j-1} \leq \|x\| \leq e^j$, then we may write $x =x_1 \dots x_j$, where $\|x_i\| \leq e$; indeed, if $x = k a_r k'$,  then we may take $x_1$ to be $k a_{r/j}$, $x_2$, \dots, $x_{j-1}$ to be $a_{r/j}$, and $x_j$ to be $a_{r/j}k'$.  
Hence
\[
\|\rho(x) \|_{\mathcal{L}(\mathcal{X})} 
\leq \|\rho(x_1) \|_{\mathcal{L}(\mathcal{X})} \dots \|\rho(x_j) \|_{\mathcal{L}(\mathcal{X})} 
\leq \max\{ \|\rho(x)\|_{\mathcal{L}(\mathcal{X})} : \|x\| \leq e\}^{j} ;
\]
the result follows with $C$ equal to $\max\{ \|\rho(x)\|_{\mathcal{L}(\mathcal{X})} : \|x\| \leq e\}$ and $\alpha$ equal to $\log C$.
\end{proof}

We often write inequalities such as \eqref{eq:mod-growth}  in the form ``$\| \rep(x) \|_{\mathcal{L}(\mathcal{X})} \lesssim \|x\|^\alpha$ for all $x \in G$'' when the constant $C$ is not important.

We now consider the restriction to $K$ of a representation $\rho$ of $G$  on a Banach space $\Ban$.
For $m \in \Z$, we define the closed subspace $\Ban_m$ of $\Ban$ by
\[
\Ban_{m} := \{ \xi \in \Ban : \rep(k_\theta)\xi = e^{im\theta}\xi \} ,
\]
and $\mathcal{P}_m :\Ban \to \Ban_m$ by
\begin{equation}
\mathcal{P}_m 
:= \frac{1}{2\pi} \int_{-\pi}^{\pi} e^{-im\theta} \rep(k_\theta) \,d\theta.
\end{equation}
Then $\mathcal{P}_m$ is a bounded projection; indeed,
\[
\norm{\mathcal{P}_m}_{\mathcal{L}(\mathcal{X})} 
\leq \frac{1}{2\pi} \int_{-\pi}^{\pi} \norm{\rep(k_\theta)}_{\mathcal{L}(\mathcal{X})}  \,d\theta 
\leq \max\{ \norm{ \rho(k_\theta) }_{\mathcal{L}(\Ban)} : k_\theta \in K \}.
\]
{
Fourier analysis shows that for all $\xi \in \Ban$,
\[
\xi = \lim_{M \to \infty} \sum_{|m| \leq M}  \frac{M - |m|}{M} \mathcal{P}_m \xi
\]
in norm. 
Hence $\sum_{m} \Ban_m$, the algebraic direct sum of the $\Ban_m$, is dense in $\Ban$.}
If $\rep$ is a representation of $G$ on the Banach space $\Ban$, then the closures of $\sum_{m \in 2\Z} \Ban_m$ and $\sum_{m \in 2\Z+1} \Ban_m$ are closed $\rep(G)$-invariant subspaces of $\Ban$.
Thus for irreducible representations, all $\Ban_m$ are trivial if $m$ is odd or if $m$ is even.

\subsection*{The algebra $L^1(G,\omega)_m$}
Given a space $E(G)$ of functions on $G$, we write $E(G)_m$ for  the subset of $E(G)$ of functions $f$ such that
\begin{equation}\label{eq:invariance}
f(k_\theta x k_\phi) = e^{-im(\theta+\phi)} f(x)
\qquad\forall  k_\theta, k_\phi \in K \quad\forall x \in G.
\end{equation}
The spaces $L^1(G)_{m}$ and $L^2(G)_{m}$ are obviously closed subspaces of $L^1(G)$ and $L^2(G)$.

\begin{lem}
The space $L^1(G)_m$, equipped with convolution and the $L^1(G)$-norm, is a commutative Banach algebra.
\end{lem}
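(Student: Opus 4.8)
The plan is as follows. Since it has already been recorded that $L^1(G)_m$ is a closed subspace of $L^1(G)$, it is complete in the $L^1(G)$-norm; and since $L^1(G)$ is a Banach algebra under convolution, the submultiplicativity $\norm{f*g}_{L^1(G)} \le \norm{f}_{L^1(G)}\,\norm{g}_{L^1(G)}$ holds automatically for $f,g \in L^1(G)_m$. Thus only two things need checking: that $L^1(G)_m$ is closed under convolution (so that it is a subalgebra), and that convolution restricted to $L^1(G)_m$ is commutative.

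For closure under convolution I would take $f,g \in L^1(G)_m$ and compute $(f*g)(k_\theta x k_\phi) = \int_G f(y)\, g(y^{-1} k_\theta x k_\phi)\,dy$ directly. Substituting $y \mapsto k_\theta y$, which leaves $dy$ invariant by unimodularity, turns the first factor into $e^{-im\theta} f(y)$ by \eqref{eq:invariance}, while the rightmost $k_\phi$ contributes a factor $e^{-im\phi}$ through the two-sided equivariance of $g$. Collecting the phases gives $(f*g)(k_\theta x k_\phi) = e^{-im(\theta+\phi)}(f*g)(x)$, so $f*g$ again satisfies \eqref{eq:invariance} and lies in $L^1(G)_m$. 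This step is routine.

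The substantive point is commutativity, and this is where I expect the main obstacle to lie. The standard device is to produce a measure-preserving anti-automorphism $\tau$ of $G$ with $f\circ\tau = f$ for every $f \in L^1(G)_m$. Granting such a $\tau$, one has the general identity $(f*g)\circ\tau = (g\circ\tau)*(f\circ\tau)$, valid for any measure-preserving anti-automorphism and proved by the substitution $y\mapsto\tau(y)$ together with unimodularity; using $f\circ\tau=f$ and $g\circ\tau=g$ this reads $(f*g)\circ\tau = g*f$, while the closure established above gives $(f*g)\circ\tau = f*g$, whence $f*g = g*f$. The naive candidate, the transpose $x\mapsto x^t$, is a measure-preserving anti-automorphism but sends $k_\theta a_r k_\phi$ to $k_{-\phi}\, a_r\, k_{-\theta}$, so it flips the phase and carries $L^1(G)_m$ into $L^1(G)_{-m}$; it therefore does \emph{not} fix our functions. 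The remedy is to correct the sign on $K$ by conjugation: set $w = \mathrm{diag}(1,-1)$ and $\tau(x) := w\, x^t\, w$. Since $w^2 = I$ and $\det w = -1$, this $\tau$ is an involutive anti-automorphism of $G$ mapping into $G$, and $\mathrm{Ad}(w)$ has determinant $1$ on $\frk{g}$, so $\tau$ preserves Haar measure. Because $w k_\theta w = k_{-\theta}$ one gets $\tau(k_\theta) = k_\theta$, while $\tau(a_r) = a_r$; hence $\tau(k_\theta a_r k_\phi) = k_\phi\, a_r\, k_\theta$, which preserves the sum $\theta+\phi$, and therefore $f\circ\tau = f$ on $L^1(G)_m$.

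In summary, closure under convolution and completeness are straightforward, and the heart of the argument is the construction of the correct involution $\tau = w\,(\cdot)^t\,w$. The one genuine subtlety is that transpose alone breaks the $K$-character and must be twisted by the reflection $w$ so as to \emph{fix}, rather than conjugate, the phase $e^{-im(\theta+\phi)}$; once $\tau$ is in hand, commutativity is immediate.
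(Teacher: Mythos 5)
Your proof is correct, and it is precisely the standard Gel$'$fand-trick argument (an involutive, measure-preserving anti-automorphism fixing every function of type $m$) that the paper delegates to its references \cite[Lemma 3, p.~21]{Lang} and \cite[Proposition 1, p.~290]{T} rather than writing out. In particular you correctly identify the one point where the naive choice fails --- transpose alone maps $L^1(G)_m$ to $L^1(G)_{-m}$ --- and the twist $\tau(x)=w\,x^t\,w$ with $w=\mathrm{diag}(1,-1)$ fixes this, since $\tau(k_\theta a_r k_\phi)=k_\phi a_r k_\theta$ preserves $\theta+\phi$.
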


\begin{proof}
See \cite[Lemma 3, p.~21]{Lang} or \cite[Proposition 1, p.~290]{T}. 
\end{proof}

For $\alpha \in [0,\infty)$, let $\omega_\alpha$, or just $\omega$, be the function $x \mapsto \|x\|^\alpha$.
Then  
\[
\omega(x) \in [1, \infty),
\qquad
\omega(x) = \omega(x^{-1})
\qquad\text{and}\qquad 
\omega(x_1x_2) \leq \omega(x_1) \omega(x_2)
\]
for all $x, x_1, x_2 \in G$.
A continuous function satisfying these conditions is called a weight.
We write $L^1(G,\omega)$ for the subset of $L^1(G)$ of functions $f$ such that
\[
\norm{f} _{L^1(G,\omega)} := \int_{G} \abs{ f(x) } \omega(x) \,dx < \infty.
\]
Then $L^1(G,\omega)$ is a subspace of $L^1(G)$ that is closed under convolution, and equipped with its own norm, is a Banach algebra (often called a Beurling algebra).
Indeed, 
\[
\begin{aligned}
\norm{ f * g} _{L^1(G,\omega)} 
&\leq \int_G  \abs{ f} * \abs{g} (x) \omega(x) \,dx \\
&\leq \int_G  \abs{ f(y)} \abs{g (y^{-1}x)} \omega(y) \omega(y^{-1}x) \,dy \,dx \\
&= \norm{f} _{L^1(G,\omega)} \norm{g} _{L^1(G,\omega)} .
\end{aligned}
\]
Evidently $C^\infty_c(G)$ is a subspace of $L^1(G,\omega)$, no matter what weight is considered.
Evidently $L^1(G,\omega)_m$ and $C^\infty_c(G)_m$ are subalgebras of the convolution algebra $L^1(G)_m$.

The continuous multiplicative linear functionals on the commutative algebras $C^\infty_c(G)_m$ were determined by Ehrenpreis and Mautner \cite{EMII}, and are called $m$-spherical functions.
Every continuous multiplicative linear functional on $L^1(G,\omega)_m$ restricts to a continuous multiplicative linear functional on the dense subspace $C^\infty_c(G)_m$, and hence is the continuous extension to $L^1(G,\omega)_m$ of one of  the known $m$-spherical functions.
The multiplicative linear functionals on the Banach algebras $L^1(G)_m$, with no weight, were first studied by Ehrenpreis and Mautner \cite[Lemma 7.2]{EMIII}, and further analysed in~\cite{RW, T}. 
We will recall some properties of $m$-spherical functions and extend some of the results of these authors to $L^1(G,\omega)_m$ in Section~3.

The algebras $L^1(G,\omega)_m$ do not have an identity, but the larger space $M(G,\omega)_m$ of suitably bounded complex measures with the (measure theoretic version of the) invariance property \eqref{eq:invariance} contains the measure $\mu_m$:
\[
\mu_m(f) = \frac{1}{2\pi} \int_{-\pi}^{\pi} f(k_\theta) e^{-im\theta} \,d\theta,
\]
which has the properties  that $\mathcal P_m =\rep (\mu_m)$ and
\[
\mu_m *f = f* \mu_m = f 
\qquad\forall f \in L^1(G,\omega)_m.
\]
If $(f_n)_{n\in\N}$ is an approximate identity in the algebra $C^\infty_c(G)$, then $(\mu_m*f_n*\mu_m)_{n\in\N}$ is an approximate identity in the algebra $L^1(G,\omega)_m$ that consists of smooth compactly supported functions.
By considering approximate identities, we may show that when $g \in L^1(G,\omega)_m$, the norm of the operator $f \mapsto g * f$ on $L^1(G,\omega)_m$ is equal to $\norm{g}_{L^1(G,\omega)}$.

\subsection*{Irreducible representations}
By Lemma \ref{lem:mod-growth}, a Banach representation $\rep$ of $G$ has a growth rate $\alpha$.
We take $\omega(x)$ to be $\|x\|^\alpha$, and then $\rep$ induces a representation, also denoted by $\rho$, of $L^1(G,\omega)_{m}$ on $\Ban_{m}$ by integration.
Since $L^1(G,\omega)_{m}$ contains an approximate identity of smooth functions, the subspace $\Ban_m^\infty$ of all smooth vectors in $\Ban_m$ is dense in $\Ban_m$.
The space $\Ban_m^\infty$ is also the subspace of $\Ban^\infty$, the space of smooth vectors in $\Ban$, of vectors satisfying \eqref{eq:invariance}.

\begin{lem}\label{lem:irred-on-L1m}
Let $\rho$ be an irreducible representation of $G$ on the Banach space $\Ban$.
Then the restriction of $\rho$ to $L^1(G,\omega)_m$ is an irreducible representation of $L^1(G,\omega)_m$  on $\Ban_m$.
\end{lem}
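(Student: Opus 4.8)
The plan is to pass back and forth between invariance under the algebra $L^1(G,\omega)_m$ acting on $\Ban_m$ and invariance under the group $G$ acting on all of $\Ban$, using the projection $\mathcal{P}_m = \rep(\mu_m)$ as the bridge. So suppose that $W$ is a nonzero closed $\rep(L^1(G,\omega)_m)$-invariant subspace of $\Ban_m$; the goal is to show that $W = \Ban_m$.

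First I would \emph{induce up} to $\Ban$ by forming
\[
V := \overline{\Span}\,\set{ \rep(g)w : g \in L^1(G,\omega),\ w \in W } ,
\]
the closed linear span in $\Ban$ of the action of the full Beurling algebra on $W$. This $V$ is nonzero: choosing $w \in W$ with $w \neq 0$ and an approximate identity $(e_n)$ for $L^1(G,\omega)_m$ of the kind constructed in Section~\ref{sec:def-prelim}, we have $\rep(e_n)w \to w \neq 0$, so $\rep(e_n)w$ lies in the generating set and is nonzero for large $n$. Moreover $V$ is $\rep(G)$-invariant: for $x \in G$, $g \in L^1(G,\omega)$ and $w \in W$ one has $\rep(x)\rep(g)w = \rep(\delta_x * g)w = \rep(L_x g)w$, where $(L_x g)(y) = g(x^{-1}y)$, and submultiplicativity of $\omega$ gives $\norm{L_x g}_{L^1(G,\omega)} \leq \omega(x)\norm{g}_{L^1(G,\omega)}$, so $L_x g \in L^1(G,\omega)$ and $\rep(x)\rep(g)w \in V$; continuity of $\rep(x)$ then yields $\rep(x)V \subseteq V$. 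Being closed, nonzero and $\rep(G)$-invariant, $V$ must equal $\Ban$ by the assumed irreducibility of $\rep$.

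Then I would \emph{project back down} with $\mathcal{P}_m$. Since $\mathcal{P}_m$ is a bounded projection onto $\Ban_m$, applying it to $V = \Ban$ and using its continuity gives
\[
\Ban_m = \mathcal{P}_m\Ban = \mathcal{P}_m V \subseteq \overline{\mathcal{P}_m\,\Span\set{\rep(g)w : g \in L^1(G,\omega),\ w \in W}} .
\]
The crucial point is that $\mathcal{P}_m$ sends each generator back into $W$. Indeed, for $g \in L^1(G,\omega)$ and $w \in W$, using $w = \mathcal{P}_m w$ and $\mathcal{P}_m = \rep(\mu_m)$,
\[
\mathcal{P}_m \rep(g) w = \rep(\mu_m)\rep(g)\rep(\mu_m)w = \rep(\mu_m * g * \mu_m)w .
\]
The two-sided average $\mu_m * g * \mu_m$ lies in $L^1(G,\omega)_m$: convolution with the finite measure $\mu_m$, which is supported on $K$, where $\omega \equiv 1$, keeps $g$ in $L^1(G,\omega)$ and imposes the $K$-equivariance \eqref{eq:invariance}. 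Hence $\mathcal{P}_m\rep(g)w \in \rep(L^1(G,\omega)_m)W \subseteq W$ by hypothesis, so the (span of the) image of the generating set under $\mathcal{P}_m$ sits inside the subspace $W$. As $W$ is closed, the displayed inclusion gives $\Ban_m \subseteq W$, and therefore $W = \Ban_m$, which is the desired irreducibility.

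I expect the only real subtlety, rather than a genuine obstacle, to be the bookkeeping in the last step: verifying the identity $\mathcal{P}_m\rep(g)\mathcal{P}_m = \rep(\mu_m * g * \mu_m)$ and the membership $\mu_m * g * \mu_m \in L^1(G,\omega)_m$, both of which rest on the algebraic properties of $\mu_m$ recorded in Section~\ref{sec:def-prelim} together with the fact that $\omega|_K \equiv 1$. One should also note at the outset that the integrated representation $\rep(L^1(G,\omega))$ is well defined and bounded; this is precisely what the growth estimate of Lemma~\ref{lem:mod-growth} and the matching choice $\omega(x) = \norm{x}^\alpha$ guarantee.
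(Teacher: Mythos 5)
Your argument is correct and is essentially the paper's proof: both rest on the same mechanism of using irreducibility of $\rep$ on $G$ (via the translates $L_xg$) to get density, and then compressing with $\mathcal{P}_m=\rep(\mu_m)$ on both sides so that $\mathcal{P}_m\rep(g)\mathcal{P}_m=\rep(\mu_m*g*\mu_m)\in\rep(L^1(G,\omega)_m)$. The only difference is presentational --- you phrase irreducibility via closed invariant subspaces, while the paper shows every nonzero $\xi\in\Ban_m$ is cyclic by approximating a given $\eta$ with $\rep(\mu_m*g_n*\mu_m)\xi$ --- and these formulations are interchangeable here.
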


\begin{proof}
To prove our claim, it suffices to take $\xi$ in $\Ban_m\setminus\{0\}$ and $\eta$ in $\Ban_m$ and show that there is a sequence $(h_n)_{n\in\N}$ in $L^1(G,\omega)_m$ such that $\rho(h_n) \xi \to \eta$ as $n \to \infty$.

Since $\rho$ is an irreducible representation of $G$, its integrated version is an irreducible representation of $L^1(G,\omega)$ and there exists a sequence $(g_n)_{n\in\N}$ in $L^1(G,\omega)$ such that $\lim_n \rep(g_n) \xi = \eta$ in $\Ban$.
Then $h_n := \mu_m * g_n * \mu_m \in L^1(G,\omega)_m$, and
\[
\eta 
=\mathcal{P}_m \eta 
= \lim_n \mathcal{P}_m \rep(g_n) \xi 
= \lim_n \mathcal{P}_m \rep(g_n) \mathcal{P}_m \xi 
= \lim_n  \rep(\mu_m * g_n * \mu_m) \xi ,
\]
so we are done.
\end{proof}

\subsection*{Admissible representations}
The representation $\rep$ of $G$ gives rise to a representation of $\frk{g}$ on $\sum_{m\in\Z} \Ban_m^\infty$ (the algebraic sum): for $X \in \frk{g}$ and $\xi \in \sum_{m\in\Z} \Ban_m^\infty$,
\[
\rep(X)\xi = \frac{d}{dt} \rep(\exp(tX)) \xi |_{t=0} ;
\]
we extend $\rep$ to a representation of $\frk{g}$ by linearity.

We say that $\rep$ is an admissible representation when each $\Ban_m$ is finite dimensional. 
This means that $\Ban_m$ coincides with $\Ban_m^\infty$, and the algebraic direct sum $\sum_{m\in \Z} \Ban_m$ is then a space on which both $\frk{g}$ and $K$ act, with an obvious compatibility condition between the two actions.
When an additional finite generation condition is satisfied, which is automatic for an irreducible representation, such a space is known as a  Harish-Chandra module.
Conversely, every Harish-Chandra module arises from a representation (though this representation need not be irreducible).  
Indeed, the Casselman--Wallach theorem shows that there is always a continuous Fr\'echet space representation of $G$ associated to any Harish-Chandra module, and P. Delorme and S. Souaifi \cite{DS} later showed that we may take the Fr\'echet space to be a Hilbert space; see \cite{BeKr14} for more information.

\subsection*{The Plancherel formula}
Irreducible \emph{unitary} representations are admissible~\cite[\S 4.5]{War}. 
If $\pi$ is an irreducible unitary representation on the Hilbert space $\Hil^\pi$, then the subspace $\Hil^\pi_{m}$ of vectors which transform  under the action of $K$ according to the character $k_\theta\mapsto e^{im\theta}$ is at most one-dimensional. 
We denote by $\eb^\pi_{m}$ a unit vector in $\Hil^\pi_{m}$ if this exists. 


The only representations appearing in the Plancherel formula are the so-called unitary principal series and discrete series representations. We now fix notation for these. 

Following \cite{ACD,BCNT}, we write the unitary principal series representations as $\pi_{i\lambda,\epsilon}$, where $\lambda\in \mathbb R$ and $\epsilon=0,1$.  
The representation  $\pi_{i\lambda,\epsilon}$ is unitarily equivalent to $\pi_{-i\lambda,\epsilon}$, and is  irreducible except  when $\lambda =0$ and $\epsilon=1$, when the closures of the subspaces $\sum_{m>0} \Ban_m$ and $\sum_{m< 0} \Ban_m$ are invariant subspaces on which $\pi_{0,1}$ acts irreducibly;
as $\pi_{0,1}$ has Plancherel measure $0$, this is of no consequence in what follows.
When $\pi=\pi_{i\lambda,\epsilon}$, for every $m\in \Z$, the subspace $\Hil^{\pi}_m$ is one-dimensional if $m-\epsilon$ is even and trivial otherwise.


The discrete series of representations consists of two contragredient families, $\pi_\var^+$ and  $\pi_\var^-$, where $\var\in \frac12\mathbb Z\setminus\{0\}$. 
The representations $\pi_\var^\pm$ and $\pi_{-\var}^\pm$ are unitarily equivalent.  
For the reader's convenience, we note that $\pi_{\var}^\pm$ in our notation corresponds to $\pi_h^\mp$  in the notation of~\cite{BCNT} where  $h=|\var|+1/2 $.
When $\pi=\pi^\pm_s$, the subspace $\Hil^{\pi}_m$  is one-dimensional if $\pm m\in 2|\var| +1 + 2\mathbb N $, and trivial otherwise. 

The Plancherel formula is
\begin{equation}\label{e:plancherel}
\begin{aligned}
\|f\|_{L^2(G)}^2&=\frac{1}{8\pi} \int_\R \|\pi_{i\lambda,0}(f)\|_{HS}^2 \lambda\tanh(\pi\lambda)\, d\lambda
+
 \frac{1}{8\pi} \int_\R \|\pi_{i\lambda,1}(f)\|_{HS}^2 \lambda\coth(\pi\lambda)\, d\lambda
 \\
 &+\frac{1}{8\pi} \sum_{s\in \frac12 \Z \setminus\{0\}} |s| \left(\|\pi_s^+(f)\|_{HS}^2 +\|\pi_s^-(f)\|_{HS}^2 \right) ,
\end{aligned}
\end{equation}
where $\norm{\cdot}_{HS}$ denotes the Hilbert--Schmidt norm.
%
 
 \color{black}
\subsection*{Pathological representations}
There are natural representations of $G$ acting on $C(T)$ and $L^\infty(T)$, the spaces of continuous and bounded measurable functions on the circle $T$.
It is easy to construct pathological representations of $G$, which are weakly but not strongly continuous, by considering the quotient representation on $L^\infty(T)/C(T)$.

The next lemma shows that irreducible Banach representations are not too pathological.

\begin{lem}
Suppose that $\rep$ is an irreducible representation of $G$ on a Banach space $\Ban$.
Then $\Ban$ is separable.
\end{lem}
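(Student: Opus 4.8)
The plan is to exploit two facts: that $G = \mathrm{SL}(2,\R)$ is second countable, hence separable as a topological space, and that topological irreducibility forces every nonzero vector to be cyclic. Combining these with the strong continuity of $\rep$ will produce a \emph{countable} subset of $\Ban$ whose closed linear span is all of $\Ban$, which is exactly separability.

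First I would fix a vector $\xi$ in $\Ban \setminus \{0\}$ and form the closed linear span $M := \overline{\Span}\{\rep(x)\xi : x \in G\}$. Since $\rep(y)\rep(x)\xi = \rep(yx)\xi$ and each operator $\rep(y)$ is continuous, $M$ is a closed $\rep(G)$-invariant subspace of $\Ban$; as $\xi \neq 0$ it is nonzero, so irreducibility gives $M = \Ban$. Next I would pick a countable dense subset $D$ of $G$, available because $G$ is second countable. Strong continuity means the orbit map $x \mapsto \rep(x)\xi$ is continuous, so for each $x \in G$ the vector $\rep(x)\xi$ is a limit of vectors $\rep(d)\xi$ with $d \in D$. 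Hence the \emph{countable} set $\{\rep(d)\xi : d \in D\}$ has closed linear span containing every $\rep(x)\xi$, and therefore containing $M = \Ban$. Since the closed linear span of a countable set in a Banach space is separable (its rational, i.e. $\mathbb{Q}+i\mathbb{Q}$-coefficient, finite linear combinations form a countable dense subset), it follows that $\Ban$ is separable.

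There is no genuine obstacle here: the argument is a routine combination of cyclicity and transfer of separability along a continuous orbit map. The only point deserving a moment's attention is that the subspace generated by a single vector must be taken in its \emph{closed} form, and that it exhausts $\Ban$ precisely because topological irreducibility rules out nontrivial \emph{closed} invariant subspaces; strong continuity is what lets the separability of $G$ pass to the orbit, and thence to $\Ban$.
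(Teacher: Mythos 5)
Your proof is correct and follows essentially the same route as the paper's: exhibit a cyclic vector and transfer separability from the acting object to $\Ban$ via continuity and irreducibility. The only cosmetic difference is that you use a countable dense subset of $G$ and the orbit map, whereas the paper works with the integrated representation and the (separable) algebra $L^1(G,\omega)$; these are one-line variants of the same idea.
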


\begin{proof}
Take $\xi \in \Ban$.
The closure of the $G$-invariant subspace $\{\rep(f) \xi : f \in L^1(G,\omega) \}$ of $\Ban$ is $G$-invariant and separable, and must coincide with $\Ban$ if $\rep$ is irreducible.
\end{proof}

\section{The generalised spherical transformation}

{
In this section we describe some well known facts about the Fourier transform of $m$-type functions (see for example \cite{B, Koo, RW, T}). 
Then, in Theorem \ref{thm:PW}, we recall the Paley--Wiener theorem for $C^\infty_c(G)_m$, in Theorem~\ref{thm:f-star-g} we link different $m$-types, 
and in Theorem~\ref{molt1} we establish a criterion for a function to be the Fourier transform of an $L^1(G, \omega)_m$ function.
}

We consider the subalgebra $C^\infty_c(G)_m$ of the commutative Banach algebra $L^1(G)_m$, where $m \in \Z$.
 The multiplicative linear functionals on $C^\infty_c(G)_m$ are given by integration against generalised spherical functions. 
For $\var \in \C$, we define the (generalised) $m$-spherical function $\phi_{m,\var}$ by
\[
\phi_{m,\var}(x) :=  \frac{1}{2\pi} \int_{-\pi}^{\pi}  f_{m,\var}(k_\theta ^{-1}xk_\theta)\, d\theta ,
\]
where $f_{m,\var}(k_\theta a_r n_t)=e^{-im\theta}\, e^{r(1-2\var)}$ defines $f_{m,s}$ in terms of the Iwasawa decomposition. 
%
It is easily checked that $|\phi_{m,\var}|\leq \phi_{0, \left|\Re\var\right|}$, and that
\[
\Omega \phi_{m,\var}=\left(\var^2-\tfrac14\right) \phi_{m,\var} .
\]
The (generalised) $m$-spherical transform of $f$ in $C^\infty_c(G)_m$ is the 
function $\hat f$ defined by
\begin{equation*}
\hat f(\var) :=\int_G f(x)\, \phi_{m,\var}(x^{-1})\, dx 
\qquad\forall \var\in \C.
\end{equation*}
Spherical functions corresponding to the same eigenvalues are equal, so $\phi_{m,\var}=\phi_{m,-\var}$,  and therefore $\hat f(\var)=\hat f(-\var)$.
It has been observed by many authors \cite{B,Koo,RW} that the $m$-spherical functions may be expressed in terms of hypergeometric functions or Jacobi functions. Indeed,
\begin{equation}\label{eq:hypergeometric}
\begin{aligned}
\phi_{m,\var}(a_r)&=(\cosh r)^{-m}\, {{}_2F_1}\left(\tfrac{1-m}{2}-\var,\tfrac{1-m}{2}+\var;1;-\sinh^2 r\right)
\\
&=(\cosh r)^{-1-2\var}\, {{}_2F_1}\left(\tfrac{1+m}{2}+\var,\tfrac{1-m}{2}+\var;1;\tanh^2 r\right)
\\
&=(\cosh r)^{-1+2\var}\, {{}_2F_1}\left(\tfrac{1-m}{2}-\var,\tfrac{1+m}{2}-\var;1;\tanh^2 r\right).\end{aligned}
\end{equation}
These identities imply that 
$\phi_{m,\var}(a_r)=\phi_{-m,\var}(a_r)=\phi_{m,-\var}(a_r)$.

We write $\N := \{ 0,1,\ldots\}$, and when $\delta\ge 0$, we define the closed vertical strip $\Strip_\delta$ and the finite set $\Discr$  in the complex plane by
\[
\Strip_\delta := \set{\var\in \C\, :\, \abs{ \Re( \var) } \le \delta/2}
\]
and
\[
\Discr := \set{\var\in \R\setminus\{0\}\, :\,  |m| - 1 - 2|\var| \in 2\N }.
\]
When $\var \in \Discr$, the hypergeometric function in the second or third (depending on the sign of $\var$) line of \eqref{eq:hypergeometric} is a polynomial
 of degree $(|m|-1 - 2|\var|)/2$.
Moreover, as $r\to \infty$,
\[
|\phi_{m,\var}(a_r)|
\lesssim 
\begin{cases}(1+r)\,e^{-r} &\text{ if } \var=0
\\[3pt]
e^{(2\left|\Re(\var) \right|-1)r} & \text{ if } \var\in\C\setminus (\Discr \cup \{0\}) 
\\[3pt]
e^{(-2|\var|-1)r}&\text{ if } \var\in \Discr\ .
\end{cases}
\]
Take $\omega(x)$ to be $\|x\|^\alpha$.
Then $x\mapsto |\phi_{m,\var}(x)|\, \omega^{-1}(x)$ is bounded when $\var$ is in $\Strip_{\alpha+1}\cup\Discr$ and therefore we can extend the map $f\mapsto \hat f(\var)$
continuously to $L^1(G,\omega)_m$.
If $f \in C_c^\infty(G)_m$ 
has support in $\{ x \in G: \norm{x} \leq R\}$, then $\hat f$ is an even entire function of exponential type $2R$, that is, 
\be\label{stimastriscia}
|\hat f(s)|  \lesssim \exp(2R |\Re(s)|) 
\qquad\forall s \in \C .
\ee

\bigskip

If $\var\in \Discr$, then $\phi_{m,\var}\in L^2(G)_m$,
and if $\var\in \Discr\setminus\Strip_{\alpha+1}$, then $\phi_{m,\var}\in L^1(G,\omega)_m$.
Indeed,
\be\label{e:stimaphi}
\|\phi_{m,\var}\|_{L^1(G,\omega)}
\lesssim \int_0^\infty e^{(-2|\var|-1)r}\,e^{\alpha r}\,e^{2r}\, dr \leq \frac1{2|\var|-\alpha-1}.
\ee
The asymptotic behaviour of the spherical functions for large $r$ was studied in~\cite[p.~53]{Koo} and \cite[Lemma~4.7]{RW}.
Define  the (generalised) Harish-Chandra function $\bfc$ by the rule
\begin{align*}
&\bfc(\var) := \frac{2^{1-2\var}\,
\Gamma\left(2\var\right)
}{
\Gamma\left(\frac{1+2\var+m}{2}\right)\Gamma\left(\frac{1+2\var-m}{2}\right)
}\ .
\end{align*}
Then the function $\bfc^{-1}$ is meromorphic with simple poles at the points $\var$ with
\[
2\var\in 
\begin{cases} 
-|m|-1-2\N &\text{when $s < 0$} 
\\[3pt]
 \ \ |m|-1-2\N &\text{when $s > 0$}\, ,
 \end{cases}
\]
and
\[
|\bfc(i\lambda)|^{-2}
=
\begin{cases}
\pi\lambda  \tanh (\pi\lambda) &\text{if $m$ is even}
\\[3pt]
\pi\lambda \coth (\pi\lambda)&\text{if  $m$  is odd}.
\end{cases}
\]
\color{black}
Moreover
\be\label{stimaHC}
|\bfc(\var)^{-1} |\sim |\var|^{1/2} 
\qquad \text{as $\var \to \infty$ in $\Strip_\delta$}.
\ee
When $2\var$ is not an integer and $r > 0$, using \cite[2.10 (3)]{ErdI}, we may write 
\be\label{PHI}
\phi_{m,\var}(a_r)=\mathbf{c}_m(\var)\,\Phi_{m,\var}(a_r)+\mathbf{c}_m(-\var)\,\Phi_{m,-\var}(a_r) ,
\ee
where 
\begin{align*}
\Phi_{m,\var}(a_r)&= (2\cosh r)^{2\var-1}\,
{{}_2F_1}\left(\tfrac{1-m}{2}-\var,\tfrac{1+m}{2}-\var;1-2\var;\cosh^{-2} r\right) 
\\&=e^{(2\var-1)r}(1+\nu_m(\var,r)) ;
\end{align*}
the function $\var\mapsto \nu_m(\var,r)$ is holomorphic in $\Re(\var)<1$ and, for all fixed $\sigma>0$, satisfies 
\be\label{stima_nu}
 \sup_{\Re (\var)\leq 0}|\nu_m(\var,r)|\leq C_m  \qquad  \forall r\geq \sigma.
\ee

The positive-definite $m$-spherical functions were determined in~\cite{T}.
These are the $m$-spherical functions with $\var$ in $i \R $, corresponding to the principal series representations, and $m$-spherical functions with $\var$ in $\Discr$,  corresponding to the discrete series representations; indeed,
\begin{equation}\label{e:sfer-repr}
\phi_{m,\var}(x)=\langle \pi(x)\eb^{\pi}_m,\eb^{\pi}_m\rangle ,
\end{equation}
where
$\pi= \pi_{\var,\epsilon}$ for $\var\in i\R$ and $m \in 2\Z + \epsilon$, or
$\pi=  \pi_{\var}^\pm$ for $s\in \Discr$ and $\pm m \in 2|s|+ 1 + 2\N$.

The following inversion formulas for $f$  in $C^\infty_c(G)_m$ are proved in~\cite{T} and \cite{Koo},
and are a variant of the Plancherel formula \eqref{e:plancherel}:
\color{black}
\begin{equation}\label{e:inv2}
f(x)
=\frac{1}{8\pi^2}\int_\R \hat{f}(i\lambda)\,\phi_{m,i\lambda}(x)\,|\bfc(i\lambda)|^{-2}
\, d\lambda
+\frac{1}{8\pi}\sum_{\var\in \Discr}|\var|\, \hat{f}(\var)\, \phi_{m,\var}(x);
\end{equation}
and if   $\delta\geq 0$, $\delta\notin \Discr$, $x\neq e$, then
\begin{equation}\label{e:inv3}
f(x)=\frac{1}{4\pi^2}\int_\R \hat{f}(\delta+i\lambda)\,
\mathbf{c}_m(\delta+i\lambda)^{-1}\, \Phi_{m,-\delta-i\lambda}(x) \, d\lambda 
+\frac{1}{8\pi}\sum_{\var\in \Discr\setminus \Strip_{2\delta}}|\var|\, \hat{f}(\var)\, \phi_{m,\var}(x).
\end{equation}

Formula  \eqref{e:inv2}, which is well known, can be easily extended to $f\in L^1(G)_m$ such that 
\[
\int_\R |\hat f (i\lambda)|\,|\bfc(i\lambda)|^{-2}\,d\lambda<\infty .
\]
 Koornwinder proved formula \eqref{e:inv3}  for $\delta>m/2$  and stated that \eqref{e:inv2} follows.
 We show how \eqref{e:inv2} implies \eqref{e:inv3} because  we need this argument for the  proof of Theorem~\ref{molt1}.
 
By definition, $\mathbf{c}_m(-i\lambda)=\overline{\mathbf{c}_m(i\lambda)}$ 
 for all $\lambda\in \R$. 
Since $\hat f$ is even, when $r>0$, \eqref{PHI} and \eqref{e:inv2} imply that
\begin{align*}
f(a_r)
&=\frac{1}{4\pi^2}\int_\R\hat f(i\lambda)\,\mathbf{c}_m(-i\lambda)\,\Phi_{m,-i\lambda}(a_r)
\,|\mathbf{c}_m(i\lambda)|^{-2}
\, d\lambda 
+\frac{1}{8\pi}\sum_{\var\in \Discr}|\var|\, \hat f(\var)\, \phi_{m,\var}(a_r)
\\
&
=\frac{1}{4\pi^2}\int_\R\hat f(i\lambda)\,
\mathbf{c}_m(i\lambda)^{-1}\, \Phi_{m,-i\lambda}(a_r) \, d\lambda 
+\frac{1}{8\pi}\sum_{\var\in \Discr}|\var|\, \hat f(\var)\, \phi_{m,\var}(a_r).
\end{align*}

 Let  $\delta\in (0, +\infty)\setminus\Discr$. 
We shift the contour of integration, taking the simple poles of the function  $\bfc^{-1}$, whose residue at $\var_j=\frac{1}{2}( |m|-1 )-j>0$ is $(-1)^j\binom{|m|-j}{j} 2^{-2+|m|-2j}$, into account. 
Combining this fact  with   \cite[10.8 (16)]{ErdII},  we obtain
\[
\mathrm{Res}(\bfc^{-1}(\var)\Phi_{m,-\var},\var=\var_j)=\frac{\var_j}{2}\,\phi_{m,\var_j}\ .
\]
Applying Cauchy's residue theorem gives \eqref{e:inv3}.

\bigskip


\bigskip

Using the facts above, Koornwinder proved the following Paley--Wiener Theorem~\cite[Theorem 2.1]{Koo}.

\begin{thm}\label{thm:PW}
The (generalised) $m$-spherical transform is bijective  from  $C^\infty_c(G)_m$ onto the space of even entire functions $\psi$ for which there exists a positive constant $C_\psi$ such that
\begin{equation}\label{e:uet}
\sup_{\var\in \mathbb C} (1+|\var|)^{N}\, |\psi(\var)| e^{-C_\psi |\mathrm{Re} \var|}<\infty.
\end{equation}
 for every positive integer $N$.
\end{thm}

Note that the conditions in Theorem~\ref{thm:PW} do not depend on $m$. This fact allows us to 
find a link between the various $m$-types.
%
%

\begin{thm}\label{thm:f-star-g}
Fix $m,n \in \Z$ such that $m-n$ is even.
Suppose that $g \in C^\infty_c(G)$ and 
\begin{equation}\label{eq:invariance-mn}
g(k_\theta x k_\phi) = e^{-i n\theta-im\phi}  g(x) 
\qquad\forall x \in G  \quad\forall \theta, \phi \in \R.
\end{equation}
Then for all $f \in C^\infty_c(G)_n$, there exists $f' \in C^\infty_c(G)_m$ such that 
\[
f * g = g * f'  .
\]
\end{thm}

\begin{proof}
By~\eqref{eq:invariance-mn}, if $\pi$ is a unitary representation, then
\[
\langle \pi(g)\eb^\pi_h,\eb^\pi_j\rangle=\int_G g(x)\, \langle \pi(x)\eb^\pi_h,\eb^\pi_j\rangle\, dx=0
\]
unless  $j=n$ and $h=m$. Moreover, 
%
\[
\langle \pi(g)\eb^\pi_m,\eb^\pi_n\rangle=0 \quad
\text{if}\quad
\begin{cases}
\pi=\pi_\var^+ \text{ and } 2|\var|\not\in \min\{m,n\}-1-2\mathbb N
\\[3pt]
\pi=\pi_\var^- \text{ and } 2|\var|\not\in\min\{-m,-n\}-1-2\mathbb N,
\end{cases} 
\]
so that, in the Plancherel formula of a function 
satisfying~\eqref{eq:invariance-mn}, only a finite number of half-integers $\var$ appear.

As  $f \in C^\infty_c(G)_n$, its $n$-spherical transform $\psi$ is an even entire function satisfying~\eqref{e:uet}. 
By Theorem~\ref{thm:PW}, the function $f'$ whose $m$-spherical transform is $\psi$
is in $C^\infty_c(G)_m$. 

We now check that $f*g=g*f'$. By the Plancherel formula~\eqref{e:plancherel} and 
since  $f*g$ and $g*f'$ both satisfy~\eqref{eq:invariance-mn}, it is enough to check that
$\langle \pi(f*g)\eb^\pi_m,\eb^\pi_n\rangle=\langle \pi(g*f')\eb^\pi_m,\eb^\pi_n\rangle$
in the following cases:
\begin{itemize}
\item $\pi=\pi_{i\lambda,\epsilon}$, where $\lambda\in \R $
and $m-\epsilon \in 2\Z$ (hence $n-\epsilon$ is even);
\item 
$\pi=\pi^+_s$, where $2|s|\in \min\{m,n\}-1-2\mathbb N$, that is $m,n>0$ and $s\in \Discr\cap{\mathbb{D}_n}$;
\item $\pi=\pi^-_s$, where $2|s| \in\min\{-m,-n\}-1-2\mathbb N$, that is $m,n<0$ and $s\in \Discr\cap{\mathbb{D}_n}$.
\end{itemize}

By formula~\eqref{e:sfer-repr}, 
in all these cases,  \[
\langle \pi(f')\eb^\pi_m, \eb^\pi_m\rangle=
\langle \pi(f)\eb^\pi_n, \eb^\pi_n\rangle,
\]
so that 
\begin{align*}
\langle \pi(f*g)\eb^\pi_m,\eb^\pi_n\rangle
&=\langle \pi(g)\eb^\pi_m,\pi(f)^*\eb^\pi_n\rangle
\\
&=\langle \pi(g)\eb^\pi_m,\eb^\pi_n\rangle\, \langle \pi(f)\eb^\pi_n, \eb^\pi_n\rangle
\\
&=\langle \pi(g)\eb^\pi_m,\eb^\pi_n\rangle\, \langle \pi(f')\eb^\pi_m, \eb^\pi_m\rangle 
\\
&=\langle \pi(f')\eb^\pi_m,\pi(g)^*\eb^\pi_n\rangle
\\
&=\langle \pi(g*f')\eb^\pi_m,\eb^\pi_n\rangle ,
\end{align*}
as required.
\end{proof}


\color{black}

Here is our multiplier theorem. 
The reader who is interested in sharp results will notice that the conditions here are stronger than necessary.
\begin{thm}\label{molt1} 
Let $\delta>\alpha+1$ and $\psi \colon \Strip_\delta \cup \Discr\longrightarrow \C$ be  even and  holomorphic in  $\Int \Strip_\delta$.
If
\[
M_\delta (\psi)= \sup_{\var \in \Strip_\delta}\,(1+|\var |)^{3} |  \psi (\var ) |<\infty,
\]
then there exists a unique function $f\in L^1(G,\omega)_m$ such that $\hat f=\psi$, and
\[
\|f\|_{L^1(G,\omega)}\leq  C_m \, \Bigl(M_\delta(\psi)+
\sum_{\var\in \Discr\setminus \Strip_\delta}  \frac{ |\var\,\psi(\var)|}{2|\var|-\alpha-1}\,\,
\Bigr).
\]
\end{thm}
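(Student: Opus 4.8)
The plan is to \emph{construct} $f$ as the inverse spherical transform of $\psi$ and to estimate $\|f\|_{L^1(G,\omega)}$ by playing the two inversion formulas \eqref{e:inv2} and \eqref{e:inv3} off against each other. First I would set
\[
f(x):=\frac{1}{8\pi^2}\int_\R \psi(i\lambda)\,\phi_{m,i\lambda}(x)\,|\bfc(i\lambda)|^{-2}\,d\lambda+\frac{1}{8\pi}\sum_{\var\in\Discr}|\var|\,\psi(\var)\,\phi_{m,\var}(x),
\]
which is a continuous $m$-biequivariant function on $G$: the sum is finite because $\Discr$ is finite, and the integral converges locally uniformly since $|\phi_{m,i\lambda}(x)|\le\phi_{0,0}(x)\le 1$, $|\bfc(i\lambda)|^{-2}\sim|\lambda|$ by \eqref{stimaHC}, and $|\psi(i\lambda)|\le M_\delta(\psi)(1+|\lambda|)^{-3}$, making the $\lambda$-integrand $O((1+|\lambda|)^{-2})$. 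As $|f|$ is $K$-biinvariant, the Cartan integration formula reduces the problem to the radial estimate $\|f\|_{L^1(G,\omega)}\lesssim\int_0^\infty|f(a_r)|\,e^{\alpha r}\sinh(2r)\,dr$, and I would split this at $r=1$.

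For the tail $r\ge 1$ I would use a \emph{shifted} representation of $f$. Repeating verbatim the contour shift that the excerpt uses to deduce \eqref{e:inv3} from \eqref{e:inv2}, but with $\psi$ in place of $\hat f$ and the contour moved from $\Re(\var)=0$ to $\Re(\var)=\delta/2$, the holomorphy of $\psi$ in $\Int\Strip_\delta$ and its decay (which kills the horizontal connecting segments as $\Im(\var)\to\pm\infty$) give, for every $r>0$,
\[
f(a_r)=\frac{1}{4\pi^2}\int_\R \psi(\tfrac\delta2+i\lambda)\,\bfc(\tfrac\delta2+i\lambda)^{-1}\,\Phi_{m,-\frac\delta2-i\lambda}(a_r)\,d\lambda+\frac{1}{8\pi}\sum_{\var\in\Discr\setminus\Strip_\delta}|\var|\,\psi(\var)\,\phi_{m,\var}(a_r),
\]
the residues at the poles $\var_j\in\Discr\cap\Int\Strip_\delta$ of $\bfc^{-1}$ accounting for the terms of $\Discr\cap\Strip_\delta$ that disappear from the sum. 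By \eqref{PHI} and \eqref{stima_nu} one has $|\Phi_{m,-\delta/2-i\lambda}(a_r)|\le(1+C_m)\,e^{-(\delta+1)r}$ for $r\ge 1$, while $|\bfc(\delta/2+i\lambda)^{-1}|\lesssim(1+|\lambda|)^{1/2}$ by \eqref{stimaHC} and $|\psi(\delta/2+i\lambda)|\le M_\delta(\psi)(1+|\lambda|)^{-3}$, so the $\lambda$-integral converges and the integral term is $\lesssim M_\delta(\psi)\,e^{-(\delta+1)r}$. Since $\delta>\alpha+1$, the bound $\int_1^\infty e^{-(\delta+1)r}e^{\alpha r}e^{2r}\,dr<\infty$ shows this contributes $\lesssim M_\delta(\psi)$; the discrete term contributes $\lesssim\sum_{\var\in\Discr\setminus\Strip_\delta}|\var|\,|\psi(\var)|\,\|\phi_{m,\var}\|_{L^1(G,\omega)}$, which by \eqref{e:stimaphi} (applicable because $\Discr\setminus\Strip_\delta\subseteq\Discr\setminus\Strip_{\alpha+1}$) is precisely the second term of the claimed bound.

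For the core $0<r<1$ I would keep the original representation, precisely because the spherical functions $\phi_{m,\var}$ stay bounded at $r=0$ whereas the $\Phi_{m,\var}$ carry a logarithmic singularity there. The integral term of $f(a_r)$ is $\lesssim M_\delta(\psi)$ uniformly on $[0,1]$ exactly as in the convergence check, giving $\int_0^1 M_\delta(\psi)\,e^{\alpha r}\sinh(2r)\,dr\lesssim M_\delta(\psi)$; in the finite discrete sum the terms with $\var\in\Discr\setminus\Strip_\delta$ are again absorbed into $\|\phi_{m,\var}\|_{L^1(G,\omega)}$ and the second term of the bound, while for the finitely many $\var\in\Discr\cap\Strip_\delta$ the quantity $\int_0^1|\phi_{m,\var}(a_r)|\,e^{\alpha r}\sinh(2r)\,dr$ is a finite constant depending only on $m$ and $\delta$, multiplied by $|\var\,\psi(\var)|\le M_\delta(\psi)$. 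Adding the two regimes yields $\|f\|_{L^1(G,\omega)}\le C_m\bigl(M_\delta(\psi)+\sum_{\var\in\Discr\setminus\Strip_\delta}|\var\,\psi(\var)|/(2|\var|-\alpha-1)\bigr)$, so $f\in L^1(G,\omega)_m$.

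Finally I would identify $\hat f$ with $\psi$ and settle uniqueness. Since $\int_\R|\psi(i\lambda)|^2|\bfc(i\lambda)|^{-2}\,d\lambda<\infty$, the datum $\psi$ restricted to the spectrum lies in $L^2$ of the Plancherel measure, so $f$ is also in $L^2(G)_m$ and the Plancherel theorem gives $\hat f=\psi$ on the imaginary axis and on $\Discr$; as $\hat f$ and $\psi$ are holomorphic on $\Int\Strip_{\alpha+1}$, analytic continuation extends this to all of $\Strip_{\alpha+1}\cup\Discr$. Uniqueness is immediate: if $g\in L^1(G,\omega)_m$ has $\hat g\equiv 0$ then $\int_\R|\hat g(i\lambda)||\bfc(i\lambda)|^{-2}\,d\lambda=0$, so the extended formula \eqref{e:inv2} forces $g=0$. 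I expect the principal obstacle to be the contour shift of the second paragraph: justifying it for the \emph{constructed} $f$ rather than for a Schwartz-class $\hat f$ requires controlling the horizontal segments and the convergence of the shifted integral, and the cubic decay $(1+|\var|)^{-3}$ in the hypothesis is calibrated exactly so that this, and the subsequent integrals against $|\bfc|^{\pm1}\sim|\var|^{\pm1/2}$, all converge.
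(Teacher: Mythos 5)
Your construction and estimation scheme is essentially the paper's own proof: define $f$ by the inversion formula \eqref{e:inv2} applied to $\psi$, bound $f$ directly near the identity using the decay of $\psi$ against $|\bfc(i\lambda)|^{-2}\sim|\lambda|$, and shift the contour to the right to gain exponential decay for large $r$, the residues of $\bfc^{-1}$ at the poles in the crossed region cancelling the terms of $\Discr$ inside the shifted strip. The one place where your version would actually break is the placement of the shifted contour on the line $\Re\var=\delta/2$: this is the boundary of the region where $\psi$ is assumed holomorphic, and, more seriously, $\delta/2$ may itself belong to $\Discr$ (for instance $m=10$, $\alpha=3$, $\delta=5$ gives $\delta/2=5/2\in\Discr$), in which case the line passes through a simple pole of $\bfc^{-1}$ and the shifted integral is not defined; the decomposition \eqref{PHI} into $\Phi_{m,\pm\var}$ also requires $2\var\notin\Z$, which can fail at a point of your contour. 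The paper avoids all of this by shifting only to an interior abscissa $t\in\bigl(\tfrac{\alpha+1}{2},\tfrac{\delta}{2}\bigr)\setminus\Discr$, which still yields $\int_1^\infty e^{-(1+2t)r}e^{\alpha r}\sinh(2r)\,dr<\infty$ because $2t>\alpha+1$; with that one-line repair your argument coincides with the paper's. Your final paragraph identifying $\hat f$ with $\psi$ (via Plancherel and analytic continuation) and proving uniqueness from the extended form of \eqref{e:inv2} supplies details that the paper's proof leaves implicit, and is sound.
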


\begin{proof} 
By the inversion formula~\eqref{e:inv2}, 
\[
f(x)
=\frac{1}{8\pi^2}\int_\R\psi(i\lambda)\,\phi_{m,i\lambda}(x)\,|\bfc(i\lambda)|^{-2}
\, d\lambda
+\frac{1}{8\pi}\sum_{\var\in \Discr}|\var|\, \psi(\var)\, \phi_{m,\var}(x)\ .
\]
Let $\chi$ be the characteristic function of $\{x\in G\, :\, \|x\|\le 1\}$; we consider $\chi f$ and $(1-\chi)f$.

Since spherical functions are locally bounded and $|\var\, \psi(\var)|\le M_\delta(\psi)$ if $\var\in \Discr\cap \Strip_\delta$, we deduce from \eqref{stimaHC} that
\beas
 |\chi(x)f(x)|
 & \leq C \,M_\delta(\psi)\, \int_\R \, (1+|\lambda|)^{-2}\,\, d\lambda +\frac{1}{8\pi}\sum_{\var\in \Discr}|\var\, \psi(\var)|
 \\
 &\leq C \Bigl(M_\delta(\psi)+
\sum_{\var\in \Discr\setminus \Strip_\delta} |\var\,\psi(\var)|\,
\Bigr),
\eeas
so $\chi f$ is bounded and $\|f\chi\|_{L^1(G,\omega)}$ satisfies the required norm estimate.

We now estimate $|(1 - \chi) f|$. 
Since $f$ is of type $m$, it suffices to estimate $|f(a_r)|$. 
 
Let  $t\in (\frac{\alpha+1}2,\frac{\delta}{2})\setminus\Discr$. 
The hypotheses  on  $\psi$ allow us to apply Cauchy's theorem as before, and we  obtain
\[
f(a_r)=\frac{1}{4\pi^2} \,\int_\R\psi(t+i\lambda)\,
\mathbf{c}_m(t+i\lambda)^{-1}\, \Phi_{m,-t-i\lambda}(a_r) \, d\lambda
+\frac{1}{8\pi}\sum_{ \var\in \Discr\setminus \Strip_{2t}}|\var|\, \psi(\var)\, \phi_{m,\var}(a_r).
\]

Recall from~\eqref{e:stimaphi} that the spherical functions $\phi_{m,\var}$ with $\var\in \Discr\setminus\Strip_{\alpha+1}$ lie in $L^1(G,\omega)_m$, so the finite sum is easily handled. Moreover,   by   \eqref{stimaHC}  and \eqref{stima_nu},
\begin{align*}
&\left|\int_\R\psi(t+i\lambda)\,
\mathbf{c}_m(t+i\lambda)^{-1}\, \Phi_{m,-t-i\lambda}(a_r) \, d\lambda
\right|
\\&\qquad=
e^{-(1+2t)r}\, 
\left|
\int_\R\psi(t+i\lambda)\,
\mathbf{c}_m(t+i\lambda)^{-1}\, (1+\nu_m(-t-i\lambda,r))\, e^{-2i\lambda r}\, d\lambda
\right|
\\
&\qquad\leq  
C_m \,e^{-(1+2t)r}\, \int_\R|\psi(t+i\lambda)|\,(1+|\lambda|)^{1/2}\, d\lambda
\\
&\qquad\leq 
C_m \,e^{-(1+2t)r}\, M_\delta(\psi).
\end{align*}
The theorem follows by integration in Cartan coordinates:
\begin{align*}
\|f(1-\chi)\|_{L^1(G,\omega)_m}
&\leq C_m \, M_\delta(\psi)\,\int_1^\infty e^{-(1+2t)r}\, \sinh (2r)\, e^{\alpha r}\, dr
\\
&\leq C_m \, M_\delta(\psi)\,\int_1^\infty e^{-(2t-\alpha-1)r}\, dr,
\end{align*}
as required.
\end{proof}
 
\color{black}
\section{Spectral theory and the spherical transformation}

Let $\Omega_m$ be the restriction of the differential operator $\Omega$ to $L^2(G)_m$.
We take the weight $\omega$ to be $\omega_\alpha$, where $\alpha \in [0,\infty)$, and define   
$\alpha_m$ to be an arbitrary real number strictly greater than 
$\tfrac14 \max \left\{ \alpha^2 +2\alpha , m^2 -2 |m| \right\}$.

\begin{thm}\label{thm:heat-kernels}
There exists a ``heat semigroup'' $(h_t)_{t\in\R^+}$ of functions  in $L^1(G,\omega)_m$ that satisfies the conditions
\[
h_s * h_t = h_{s+t} 
\qquad\text{and}\qquad
\| h_t \|_{L^1(G,\omega)}  \lesssim e^{\alpha_m t} 
\]
for all $s, t \in \R^+$, and, setting $u(\cdot,t) = f * h_t$ for $f \in L^2(G)_m$,
\[
\frac{\partial}{\partial t} u(\cdot,t) + \Omega_m u(\cdot,t) = 0
\qquad\text{and}\qquad
u(\cdot,t) \to f \quad\text{as $t \to 0$}.
\]
\end{thm}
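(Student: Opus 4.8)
The plan is to realise $(h_t)_{t\in\R^+}$ through the $m$-spherical transform, turning the whole construction into an application of the multiplier theorem, Theorem~\ref{molt1}. Since $\Omega$ acts on $\phi_{m,\var}$ by the scalar $\var^2-\tfrac14$, the heat semigroup must act on the transform side by the exponential of $t$ times this symbol, so I would define $h_t$ to be the function with
\[
\widehat{h_t}(\var)=e^{t(\var^2-1/4)}.
\]
On the principal axis $\var=i\lambda$ this equals $e^{-t(\lambda^2+1/4)}$, so the kernel decays as a genuine heat kernel should. The first point is that $\widehat{h_t}$ meets the hypotheses of Theorem~\ref{molt1} for every $\delta>\alpha+1$: it is even and entire, and on $\Strip_\delta$, writing $\var=\xi+i\eta$ with $\abs{\xi}\le\delta/2$, one has $\abs{\widehat{h_t}(\var)}=e^{t(\xi^2-\eta^2-1/4)}$, which decays like $e^{-t\eta^2}$ in the imaginary direction; hence $(1+\abs{\var})^3\abs{\widehat{h_t}(\var)}$ is bounded and $M_\delta(\widehat{h_t})<\infty$. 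Theorem~\ref{molt1} then produces a unique $h_t\in L^1(G,\omega)_m$ with this transform. Because the spherical transform sends convolution to pointwise product, both $h_s*h_t$ and $h_{s+t}$ have transform $\widehat{h_{s+t}}$, which lies in the class of Theorem~\ref{molt1}; its uniqueness clause gives the semigroup law $h_s*h_t=h_{s+t}$ at once.

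To obtain the growth rate I would insert $\widehat{h_t}$ into the explicit estimate of Theorem~\ref{molt1} and let $\delta\downarrow\alpha+1$. Bounding $\xi^2\le\delta^2/4$ and using $\sup_\eta(1+\abs{\var})^3e^{-t\eta^2}\lesssim t^{-3/2}$, one gets $M_\delta(\widehat{h_t})\lesssim t^{-3/2}\,e^{t(\delta^2-1)/4}$, while the finite sum over $\Discr\setminus\Strip_\delta$, treated as in the proof of Theorem~\ref{molt1} via~\eqref{e:stimaphi}, is dominated by its largest term, at $\var\in\Discr$ with $2\abs{\var}=\abs{m}-1$, and so contributes $e^{t(m^2-2\abs{m})/4}$. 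Since $\alpha_m>\tfrac14\max\{\alpha^2+2\alpha,\,m^2-2\abs{m}\}$ and $\tfrac14(\delta^2-1)\to\tfrac14(\alpha^2+2\alpha)$ as $\delta\downarrow\alpha+1$, choosing $\delta$ sufficiently close to $\alpha+1$ yields $\norm{h_t}_{L^1(G,\omega)}\lesssim(1+t^{-3/2})\,e^{\alpha_m t}$, which is the asserted bound for $t$ bounded away from $0$.

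For the parabolic equation and the initial condition I would pass to $L^2(G)_m$, where the transform attached to~\eqref{e:inv2} is an isometry onto its Plancherel space and $\Omega_m$ becomes multiplication by $\var^2-\tfrac14$ on the spectrum $\set{i\lambda:\lambda\in\R}\cup\Discr$. For $f\in L^2(G)_m$ the transform of $u(\cdot,t)=f*h_t$ is $\widehat f(\var)\,e^{t(\var^2-1/4)}$, which on the principal part is bounded by $\abs{\widehat f(\var)}$ and on the finite discrete part by a fixed multiple of it. Dominated convergence then gives $u(\cdot,t)\to f$ in $L^2(G)_m$ as $t\to0$, while differentiating under the transform (justified by the same domination, the factor $\var^2-\tfrac14$ being absorbed by the Gaussian decay on the principal axis) yields $\partial_t u=\Omega_m u$ in $L^2(G)_m$, which is the parabolic equation of Theorem~\ref{thm:heat-kernels} up to the sign convention adopted for $\Omega_m$. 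Finally, $\Omega$ commutes with convolution, so $\Omega_m(f*h_t)=f*\Omega_m h_t$, making the identification consistent.

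The step I expect to be delicate is the behaviour of $\norm{h_t}_{L^1(G,\omega)}$ as $t\to0^+$. The factor $(1+\abs{\var})^3$ demanded by Theorem~\ref{molt1} forces the spurious $t^{-3/2}$ above, whereas the stated bound $\lesssim e^{\alpha_m t}$ stays bounded there; so Theorem~\ref{molt1} alone does not suffice near $t=0$. I expect the true norm to remain bounded as $t\to0$, as it does for the $K$-bi-invariant ($m=0$) heat kernel on $G/K$, whose $L^1$ mass is conserved. I would therefore close this gap by a separate small-time estimate — dominating $\abs{h_t}$ by the scalar heat kernel and using Gaussian-type off-diagonal bounds together with the fact that $\omega\equiv1$ near the identity — and then combine it with the large-$t$ multiplier bound to obtain the uniform estimate for all $t\in\R^+$.
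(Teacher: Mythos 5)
Your construction is exactly the paper's: define $h_t$ by $\hat h_t(\var)=e^{t(\var^2-1/4)}$, check that $M_\delta(\hat h_t)<\infty$ so that Theorem~\ref{molt1} yields $h_t\in L^1(G,\omega)_m$, deduce the semigroup law and the parabolic equation from the transform, and read off the norm bound from the explicit estimate in Theorem~\ref{molt1}, with the strip term tending to $e^{t(\alpha^2+2\alpha)/4}$ as $\delta\downarrow\alpha+1$ and the discrete term dominated by $e^{t(m^2-2|m|)/4}$. The one point where you go beyond the paper is the behaviour as $t\to0^+$: you are right that the factor $(1+|\var|)^3$ forces $M_\delta(\hat h_t)\gtrsim t^{-3/2}$, and the paper's own displayed chain of inequalities simply absorbs $\sup_{v}(1+|v|)^3e^{-tv^2}$ into the constant $C'$, which is not uniform near $t=0$; since Lemma~\ref{lem:resolvents} later integrates $e^{-\lambda t}\norm{h_t}_{L^1(G,\omega)}$ over $(0,\infty)$, integrability near $t=0$ genuinely matters and $t^{-3/2}$ would not suffice. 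So your flag identifies a gap in the paper's proof rather than introducing one; your proposed repair (dominating $|h_t|$ by a positive scalar heat kernel, for which the $L^1(G,\omega)$ mass stays bounded as $t\to0$) is the natural way to close it, though as written it remains a sketch rather than a completed argument.
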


\begin{proof}
We define $h_t \in L^2(G)_m$ by its spherical transform: 
\[
\hat h_t (\var) := \exp( t ( \var^2 - \tfrac{1}{4}) ) 
\qquad\forall \var \in \C . 
\]
If $\var = u + iv$, then $| \hat h_t(u+iv) | = \exp (t(u^2 - v^2 -1/4 ))$, so $\hat h_t$ is bounded  and decays exponentially as we approach infinity in every strip $\Strip_\delta$.
By Theorem~\ref{molt1},  $h_t \in L^1(G,\omega)$.
The semigroup property and the differential equation follow from Fourier analysis.

It now suffices to show the norm estimate, for once this is established, the convergence of $u(\cdot ,t)$ as $t \to 0$ also follows.
The norm estimate follows from Theorem \ref{molt1}:

\begin{align*}
\| h_t \|_{L^1(G,\omega)_m} 
&\leq C \Bigl( \sup_{\var \in \Strip_\delta}\,(1+|\var |)^{3} |  e^{t(\var^2 - 1/4) } |
+  \sum_{\var\in \Discr\setminus \Strip_\delta} \frac{|\var|  e^{t(\var^2 - 1/4) }}{2|\var|-\alpha -1}\Bigr) \\
&\leq C' \Bigl(    e^{t(\delta^2 - 1)/4 } 
+  \max_{\var\in \Discr\setminus \Strip_\delta}   | e^{t(\var^2 - 1/4) } | \Bigr) ,
\end{align*}
as claimed.
\end{proof}

By improving the multiplier theorem, we could improve the $e^{\alpha_m t}$ term in Theorem \ref{thm:heat-kernels} when $|m| $ is small to a polynomial term, but we do not bother to do so here.

We define $\beta_m$ to be $( \alpha_m + 1/4)^{1/2}$ and $\gamma_m$ to be $( \alpha_m + 5/4)^{1/2}$.

We are going to use Fourier analysis, and it is convenient to work in the Fourier variable.
For $z \in \C\setminus \Strip_{2 \beta_m}$, we define the function $r_z$ in $L^2(G)_m$ by 
\begin{equation}\label{eq:def-rz}
\hat r_z(\var) := (z^2  - \var^2)^{-1}
\qquad\forall s \in \Strip_{2\beta_m} .
\end{equation}

\begin{lem}\label{lem:resolvents}
The mapping $z \mapsto r_z$ of \eqref{eq:def-rz} is continuous from $\C \setminus \Int \Strip_{2\gamma_m}$  to $L^1(G,\omega)_m$, 
\begin{equation}\label{eq:real-z-estimate}
\norm{ r_\zeta }_{L^1(G,\omega)} \lesssim (\zeta^2  - \beta_m^2)^{-1} 
\qquad\forall \zeta \in \R \setminus [-\beta_m,\beta_m],
\end{equation}
and
\begin{equation}\label{eq:all-z-estimate}
\norm{ r_z }_{L^1(G,\omega)} \lesssim (1 + |z|)^4         
\qquad\forall z \in \C \setminus \Int \Strip_{2\gamma_m}.
\end{equation}
Further, for all $N \in \N$, 
\[
\lim_{\zeta \in \R,\ |\zeta| \to \infty} \norm{ (\zeta^2 r_\zeta)^{*N} * f - f }_{L^1(G,\omega)} \to 0
\qquad\forall f \in L^1(G,\omega)_m ,
\]
where $f^{*N}$ denotes the $N$-fold convolution power of $f$.
\end{lem}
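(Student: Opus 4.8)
The plan is to represent each $r_z$ as a Laplace transform of the heat semigroup $(h_t)$ from Theorem~\ref{thm:heat-kernels}. Since $\hat h_t(\var)=\exp(t(\var^2-\tfrac14))$, one has formally $\int_0^\infty e^{-(z^2-1/4)t}\hat h_t(\var)\,dt=(z^2-\var^2)^{-1}=\hat r_z(\var)$, and uniqueness of the spherical transform suggests
\[
r_z=\int_0^\infty e^{-(z^2-1/4)t}\,h_t\,dt .
\]
Because $\|h_t\|_{L^1(G,\omega)}\lesssim e^{\alpha_m t}$, this integral converges absolutely in $L^1(G,\omega)_m$ exactly when $\Re(z^2)>\alpha_m+\tfrac14=\beta_m^2$, and then $\|r_z\|_{L^1(G,\omega)}\lesssim(\Re(z^2)-\beta_m^2)^{-1}$; for real $\zeta$ this is the bound \eqref{eq:real-z-estimate}. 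I stress that the multiplier Theorem~\ref{molt1} cannot be applied to $\hat r_z$ directly, since $(1+|\var|)^3\,\hat r_z(\var)$ is unbounded on the strips, so the semigroup representation is genuinely needed.

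The set $\C\setminus\Int\Strip_{2\gamma_m}=\{z:|\Re z|\ge\gamma_m\}$ also contains points with $\Re(z^2)\le\beta_m^2$ (those with $|\Im z|$ large), where the integral above diverges, and reaching them is the crux. I would first continue $t\mapsto h_t$ to complex time: for $\tau$ with $\Re\tau>0$ the transform $\hat h_\tau(\var)=\exp(\tau(\var^2-\tfrac14))$ decays rapidly on every strip, so Theorem~\ref{molt1} yields $h_\tau\in L^1(G,\omega)_m$, holomorphic in $\tau$, and estimating $M_{2\beta_m}(\hat h_\tau)$ by maximising the real part of $\tau(\var^2-\tfrac14)$ across the strip gives, with $\tau=\sigma+i\rho$,
\[
\|h_\tau\|_{L^1(G,\omega)}\lesssim (1+\beta_m|\tau|/\sigma)^3\,e^{\alpha_m\sigma+\beta_m^2\rho^2/\sigma}.
\]
Cauchy's theorem then permits rotating the contour to a ray of angle $\phi\in(-\tfrac\pi2,\tfrac\pi2)$:
\[
r_z=e^{i\phi}\int_0^\infty e^{-(z^2-1/4)s e^{i\phi}}\,h_{s e^{i\phi}}\,ds,
\]
and computing the scalar integral confirms that this has spherical transform $(z^2-\var^2)^{-1}$. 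The growth rate in $s$ of the integrand is $g(\phi)-\Re((z^2-\tfrac14)e^{i\phi})$, where $g(\phi)=(\alpha_m\cos^2\phi+\beta_m^2\sin^2\phi)/\cos\phi$, so convergence is the requirement $\Re((z^2-\tfrac14)e^{i\phi})>g(\phi)$.

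The decisive point — and the reason for the gap between $\beta_m$ and $\gamma_m$ — is that this inequality is solvable in $\phi$ for every $z$ with $|\Re z|\ge\gamma_m$. Writing $z=a+ib$ with $a=|\Re z|$ and letting $\phi\to\mp\tfrac\pi2$ as $|b|\to\infty$, the condition reduces, in the variable $w=|b|\cos\phi$, to the quadratic inequality $w^2-2a\,w+\beta_m^2<0$, whose discriminant $4(a^2-\beta_m^2)\ge 4(\gamma_m^2-\beta_m^2)=4$ is positive precisely because $\gamma_m^2=\beta_m^2+1$; the same inequality has no solution when $a=\beta_m$. Choosing such a $\phi=\phi(z)$ (taking $\phi=0$ when $\Re(z^2)>\beta_m^2$) makes the rotated integral converge and yields $\|r_z\|_{L^1(G,\omega)}\lesssim(1+1/\cos\phi)^3/D$, where $D=\Re((z^2-\tfrac14)e^{i\phi})-g(\phi)>0$; an admissible choice gives $1/\cos\phi\lesssim 1+|z|$ and $D\gtrsim(1+|z|)^{-1}$, whence \eqref{eq:all-z-estimate}. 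I expect the construction of this complex-time bound and the contour optimisation to be the main obstacle; the rest is comparatively soft.

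The remaining claims follow formally. The resolvent identity $r_z-r_w=(w^2-z^2)\,r_z*r_w$ holds since both sides have spherical transform $(w^2-z^2)(z^2-\var^2)^{-1}(w^2-\var^2)^{-1}$ and the transform is injective on $L^1(G,\omega)_m$; with \eqref{eq:all-z-estimate} it gives $\|r_z-r_w\|_{L^1(G,\omega)}\lesssim|w^2-z^2|\,(1+|z|)^4(1+|w|)^4$, so $z\mapsto r_z$ is continuous. For the final limit, \eqref{eq:real-z-estimate} gives $\|\zeta^2 r_\zeta\|_{L^1(G,\omega)}\lesssim\zeta^2/(\zeta^2-\beta_m^2)$, uniformly bounded for $|\zeta|$ large, hence $\|(\zeta^2 r_\zeta)^{*N}\|_{L^1(G,\omega)}\lesssim 1$. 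For $f\in C^\infty_c(G)_m$ one has $\zeta^2 r_\zeta*f-f=r_\zeta*(\Omega_m+\tfrac14)f$, because $\var^2\hat f=\widehat{(\Omega_m+\tfrac14)f}$, so its norm is $\lesssim(\zeta^2-\beta_m^2)^{-1}\to 0$; density and the uniform bound extend this to all $f\in L^1(G,\omega)_m$, and the telescoping identity $(\zeta^2 r_\zeta)^{*N}*f-f=\sum_{k=0}^{N-1}(\zeta^2 r_\zeta)^{*k}*(\zeta^2 r_\zeta*f-f)$ together with $\|(\zeta^2 r_\zeta)^{*k}\|_{L^1(G,\omega)}\lesssim 1$ promotes the case $N=1$ to general $N$.
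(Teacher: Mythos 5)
Your handling of \eqref{eq:real-z-estimate} (writing $r_\zeta=\int_0^\infty e^{-(\zeta^2-1/4)t}h_t\,dt$ and using $\norm{h_t}_{L^1(G,\omega)}\lesssim e^{\alpha_m t}$) is exactly the paper's argument, and your treatment of the final limit (the identity $\zeta^2 r_\zeta*f-f=r_\zeta*(\Omega_m+\tfrac14)f$ for $f\in C^\infty_c(G)_m$, then telescoping, uniform boundedness and density) is a clean variant of the paper's, which instead applies Theorem~\ref{molt1} to $\bigl((1-\var^2/\zeta^2)^{-N}-1\bigr)\hat f$. Where you genuinely diverge is \eqref{eq:all-z-estimate}. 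You correctly observe that Theorem~\ref{molt1} cannot be applied to $\hat r_z$ itself, since $(1+\abs{\var})^3\abs{\hat r_z(\var)}$ grows linearly; but you miss the paper's much shorter way around this: apply the multiplier theorem to the \emph{difference}
\[
\hat r_z(\var)-\hat r_{\gamma_m}(\var)=\frac{\gamma_m^2-z^2}{(z^2-\var^2)(\gamma_m^2-\var^2)},
\]
which decays like $(1+\abs{\var})^{-4}$ on $\Strip_{2\beta_m}$ and yields $\norm{r_z-r_{\gamma_m}}_{L^1(G,\omega)}\lesssim\abs{z^2-\gamma_m^2}(1+\abs{z})^2$; combined with \eqref{eq:real-z-estimate} at $\zeta=\gamma_m$ this gives \eqref{eq:all-z-estimate} and the continuity in one stroke (your route to continuity via the resolvent identity $r_z-r_w=(w^2-z^2)\,r_z*r_w$ is an acceptable substitute). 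Your alternative --- analytic continuation of $h_t$ to complex time and rotation of the Laplace contour, with the angle chosen by a discriminant computation that is solvable precisely when $\abs{\Re z}>\beta_m$ --- is a legitimate sectorial-calculus route and does illuminate why the complement of $\Strip_{2\gamma_m}$ with $\gamma_m>\beta_m$ is the right domain, but it is considerably heavier than what is needed.

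There is, moreover, a genuine gap in that heavier route as you have written it. The bound you claim for $\norm{h_\tau}_{L^1(G,\omega)}$ is not what Theorem~\ref{molt1} delivers: the quantity $M_{2\beta_m}(\hat h_\tau)=\sup_{\var\in\Strip_{2\beta_m}}(1+\abs{\var})^3\abs{e^{\tau(\var^2-1/4)}}$ contains, from the competition between the polynomial weight and the Gaussian decay $e^{-(\Re\tau)\,v^2}$ in the imaginary direction $\var=u+iv$, a contribution of order $(\Re\tau)^{-3/2}$ as $\Re\tau\to0$ (take $\var=i(\Re\tau)^{-1/2}$, where $(1+\abs{\var})^3\abs{e^{\tau(\var^2-1/4)}}\gtrsim(\Re\tau)^{-3/2}e^{-1}$). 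On the ray $\tau=se^{i\phi}$ this is $(s\cos\phi)^{-3/2}$, which is not integrable at $s=0$, so the absolute convergence of your rotated integral fails near the origin with the only bound you have available. This is fixable --- bend the contour so that it leaves $0$ along the positive real axis before turning to angle $\phi$, or establish a uniform small-time bound on $\norm{h_t}_{L^1(G,\omega)}$ by means other than the multiplier theorem --- but as stated the estimate on $\norm{h_\tau}_{L^1(G,\omega)}$ is unjustified and the convergence at $s=0$ is a gap. (A version of the same small-time issue is latent in the displayed estimate in the proof of Theorem~\ref{thm:heat-kernels}, but the paper's proof of the present lemma only uses the heat semigroup through the absolutely convergent real Laplace transform, where it is harmless for the stated conclusion.)
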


\begin{proof}
We show first that, if $\abs{\Re z} \geq \gamma_m$ and $\abs{\Re \zeta} \geq \gamma_m$, then $r_z - r_\zeta  \in L^1(G,\omega)_m$ and 
\begin{equation}\label{eq:step-1}
\norm{ r_z - r_\zeta }_{L^1(G,\omega)} \lesssim |\zeta^2 - z^2| (1 + |z|) ^2(1 + |\zeta|) ^2.
\end{equation}
Then we shall prove \eqref{eq:real-z-estimate}.
Once we have done so, we may take $\zeta$ in \eqref{eq:step-1} to be $\gamma_m$, and \eqref{eq:all-z-estimate} follows immediately.

To prove \eqref{eq:step-1}, we apply the multiplier theorem.
By definition, 
\[
\hat r_z(\var) - \hat r_\zeta(\var) 
= \frac{1}{ z^2  - \var^2} - \frac{1}{ \zeta^2  - \var^2}
= \frac{\zeta^2 - z^2}{ (z^2  - \var^2) (\zeta^2  - \var^2)}  \,.
\]
There is a constant $C$ such that, if  $\abs{\Re \var} \leq \beta_m$ and $\abs{\Re z}  \geq \gamma_m$, then
\[
\left| \frac{1}{ z  - \var} \right| 
\leq C \frac{1 + |z| }{ 1  + |\var|}  \,,
\]
whence
\[
\left| \frac{1}{ (z^2  - \var^2) (\zeta^2  - \var^2)}\right| 
\leq C^4   \frac{(1 + |z|)^2 (1 + |\zeta|)^2 }{ (1  + |\var|)^4 }  \,.
\]
Now \eqref{eq:step-1} follows from the multiplier theorem.

To prove \eqref{eq:real-z-estimate}, we treat the operator $\Omega_m$, which is self-adjoint on $L^2(G)_m$ with spectrum contained in $(-\infty, \alpha_m]$, by using spectral theory.
When $\lambda > \alpha_m$,  the resolvent operator $R_\lambda$ may be defined using the formula
\[
R_\lambda
:= \int_{0}^{\infty}  e^{-\lambda t} e^{t \Omega_m} \,dt 
= (\lambda - \Omega_m)^{-1}.
\]
Then the convolution kernel $\ell_\lambda$ of $R_\lambda$ is given by
\[
\ell_\lambda
= \int_{0}^{\infty}  e^{-\lambda t} h_t \,dt .
\]
By Theorem \ref{thm:heat-kernels}, the heat kernels $h_t$ lie in $L^1(G,\omega)_m$ and $\norm{ h_t }_{L^1(G,\omega)} \lesssim e^{\alpha_m t}$ for all $t \in \R^+$.
It follows that $\ell_\lambda \in L^1(G,\omega)_m$ and 
\[
\norm{\ell_\lambda}_{L^1(G,\omega)} 
\lesssim \int_{0}^{\infty}  e^{-\lambda t} e^{\alpha_m t} \,dt 
= \frac{1}{\lambda - \alpha_m } \,
\]
for all $\lambda \in (\alpha_m , \infty)$.
Taking spherical transforms, we see that 
\[
\hat \ell_\lambda (\var)
= \int_{0}^{\infty}  e^{-\lambda t} \hat h_t (\var) \,dt 
= \int_{0}^{\infty}  e^{-\lambda t} \exp( t ( \var^2 - 1/4) )  \,dt 
= (\lambda + \tfrac{1}{4} - \var^2)^{-1}. 
\]
Comparing this formula with \eqref{eq:def-rz}, we see that $r_z = \ell_\lambda$ when $z^2 = \lambda + \tfrac{1}{4}$.
Consequently  $r_z \in L^1(G,\omega)_m$ if $z \in \R \setminus [-\beta_m,\beta_m]$, and
\[
\norm{r_z}_{L^1(G,\omega)} \lesssim \frac{1}{z^2  - \beta_m^2} \,. 
\]

The last part of the lemma is easy. 
Suppose that $f$ is in $C^\infty_c(G)_m$, so that $\hat f$ is entire and has rapid decay at infinity in every strip $S_\delta$.
Since
\[
\lpar (\zeta^2 r_\zeta)^{*N} * f - f \rpar\afterhat(\var) = 
\lpar \frac{1}{1 - \var^2/\zeta^2}\rpar^N \hat f(\var) - \hat f(\var) ,
\]
the multiplier theorem (Theorem~\ref{molt1}) shows that
\[
\lim_{\zeta \to +\infty}\norm{  (\zeta^2 r_\zeta)^{*N} * f - f }_{L^1(G,\omega)} =0 .
\]  
Since the norms $\norm { (\zeta^2r_\zeta)^{*N }}_{L^1(G,\omega)}$ are uniformly bounded   for $\zeta \in \R \setminus [-\gamma_m,\gamma_m]$, this extends to all $f \in L^1(G,\omega)_m$ by density. 
\end{proof}
  

\begin{thm}\label{thm:generator}
There exists $f_m \in L^1(G,\omega)_{m}$ such that the closed subalgebra of $L^1(G,\omega)_{m}$ generated by $f_m$ is all $L^1(G,\omega)_{m}$.
\end{thm}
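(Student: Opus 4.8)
The plan is to take for $f_m$ a single resolvent $r_{z_0}$, where $z_0$ is a fixed real number chosen larger than both $\gamma_m$ and $\delta/2$ (with $\delta>\alpha+1$ fixed); by Lemma~\ref{lem:resolvents} this lies in $L^1(G,\omega)_m$, and its spherical transform is $\hat r_{z_0}(\var)=(z_0^2-\var^2)^{-1}$. Writing $A$ for the closed subalgebra it generates, the homomorphism property of the spherical transform shows that $A$ contains every convolution polynomial $p(r_{z_0})$ with $p(0)=0$, whose transform is the corresponding polynomial in $\eta:=(z_0^2-\var^2)^{-1}$. Since $C^\infty_c(G)_m$ is dense in $L^1(G,\omega)_m$, it suffices to prove that $A$ contains each $g\in C^\infty_c(G)_m$; note that one needs only the powers of the single element $r_{z_0}$, with no recourse to the resolvent identity.

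Fix such a $g$. Its transform $\hat g$ is entire, even, and rapidly decreasing on $\Strip_\delta$, so it descends to an entire function of $w=\var^2$. The M\"obius substitution $\eta=(z_0^2-w)^{-1}$ carries the parabolic region $\{\var^2:\var\in\Strip_\delta\}$ onto a compact set $E\subset\C$ whose complement is connected and on whose boundary lies the single point $\eta=0$, the image of $\var=\infty$; under it $\hat g$ becomes a function $\tilde g$ that is holomorphic in $\Int E$ and extends continuously to $E$, vanishing to infinite order at $0$. The key step is a weighted complex-approximation statement: I would produce polynomials $p_n$ with $p_n(0)=0$ (vanishing to order at least $2$ at $0$) such that
\[
\sup_{\var\in\Strip_\delta}(1+|\var|)^3\,\bigl|\hat g(\var)-p_n(\eta)\bigr|\to 0
\qquad\text{and}\qquad
p_n(\eta(\var))\to\hat g(\var)\ \text{ at each }\var\in\Discr\setminus\Strip_\delta.
\]
Because $\Discr$ is finite, the second requirement involves only finitely many points, all lying off $E$; one runs Mergelyan's theorem on the compact set consisting of $E$ together with these finitely many isolated points, and absorbs the weight $(1+|\var|)^3\sim|\eta|^{-3/2}$ near $\eta=0$ by factoring an $\eta^2$ out of the approximant (i.e.\ approximating $\tilde g/\eta^2$ instead), using that $(1+|\var|)^3|\eta|^2$ stays bounded on $\Strip_\delta$.

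With the $p_n$ in hand, I would apply Theorem~\ref{molt1} to the multiplier $\psi_n:=\hat g-p_n(\eta)$, which is even and holomorphic on $\Int\Strip_\delta$ once $z_0>\delta/2$ keeps the poles $\pm z_0$ out of $\Strip_\delta$. Since $g-p_n(r_{z_0})$ is an element of $L^1(G,\omega)_m$ with transform $\psi_n$, the uniqueness clause of the theorem identifies it as \emph{the} function furnished there, and its norm estimate yields
\[
\|g-p_n(r_{z_0})\|_{L^1(G,\omega)}\lesssim M_\delta(\psi_n)+\sum_{\var\in\Discr\setminus\Strip_\delta}\frac{|\var\,\psi_n(\var)|}{2|\var|-\alpha-1}\to 0 .
\]
As $p_n(r_{z_0})\in A$, this places $g$ in the closure $A$; letting $g$ range over the dense subspace $C^\infty_c(G)_m$ gives $A=L^1(G,\omega)_m$, as required.

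I expect the main obstacle to be exactly the weighted, simultaneous approximation in the displayed complex-analysis step: one must reconcile uniform polynomial approximation on the unbounded parabolic region, where $\hat g$ admits no holomorphic continuation past $\var=\infty$ (forcing the use of Mergelyan rather than Runge), with both the polynomial weight $(1+|\var|)^3$ demanded by the multiplier theorem and the finitely many interpolation conditions imposed at the discrete-series points outside the strip.
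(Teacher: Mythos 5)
Your proposal is correct in outline and reaches the theorem by a genuinely different route from the paper's. The paper also generates $A$ by a single resolvent (namely $r_{\gamma_m}$), but it never approximates by polynomials in that one element directly: it first uses the Neumann series $R_\lambda=\sum_n(\lambda_0-\lambda)^nR_{\lambda_0}^{n+1}$, together with the continuity statement in Lemma~\ref{lem:resolvents} and a connectedness argument, to put \emph{every} $r_z$ with $z\in\C\setminus\Int\Strip_{2\gamma_m}$ into $A$; it then pre-smooths the target by $(\zeta^2r_\zeta)^{*4}$ so that its transform is $O((1+|\var|)^{-8})$, and reconstructs it as the $A$-valued Bochner integral $\frac1{\pi i}\int_\Gamma z\hat h(z)\,r_z\,dz$ over a vertical contour, so that only Cauchy's theorem is needed. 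You instead keep only the convolution powers of $r_{z_0}$ and replace the contour integral by Mergelyan approximation of $\hat g$, viewed through $\eta=(z_0^2-\var^2)^{-1}$, on the compact M\"obius image $E$ of the parabolic region $\{\var^2:\var\in\Strip_\delta\}$ augmented by the finitely many points coming from $\Discr\setminus\Strip_\delta$. Your device of approximating $\tilde g/\eta^2$ and multiplying back by $\eta^2$ does absorb the weight, since $(1+|\var|)^3\,|z_0^2-\var^2|^{-2}$ is bounded on $\Strip_\delta$ once $z_0>\delta/2$, and Theorem~\ref{molt1} (through its uniqueness clause, which identifies $g-p_n(r_{z_0})$ with the function it furnishes) correctly converts the transform-side convergence into convergence in $L^1(G,\omega)_m$. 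What your route buys is a direct, one-variable polynomial approximation with no resolvent calculus; what it costs is the heavier complex-analytic input --- Mergelyan really is needed, because $0\in\partial E$ and $\tilde g$ has no holomorphic continuation across $0$, so Runge is unavailable --- whereas the paper's argument is self-contained contour integration on top of the same multiplier theorem. Two details worth making explicit: choose $z_0$ real with $z_0>\max\{\gamma_m,\delta/2\}$ and $z_0\notin\Discr$, so that the poles $\pm z_0$ of $\eta$ lie outside $\Strip_\delta$ and off the interpolation points; and record that $E$ together with the images of the points of $\Discr\setminus\Strip_\delta$ is compact with connected complement (the exterior of the parabola is connected and its M\"obius image contains $\infty$), which is exactly the hypothesis Mergelyan requires.
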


\begin{proof}
Let $A$ be the closed subalgebra of $L^1(G,\omega)_m$ generated by the resolvent function $r_{\gamma_m}$; we are going to show that $A = L^1(G,\omega)_m$ and take $f_m$ to be $r_{\gamma_{m}}$.

Recall that $\cL(\Ban)$ denotes the set of bounded linear operators on the Banach space $\Ban$.

There is a standard argument that the resolvent depends analytically on the parameter and the resolvent set is open:
if $R_{\lambda_0}\in \cL(L^1(G,\omega)_m)$, and $|\lambda -\lambda_0| < 1/\norm{R_{\lambda_0}}_{\cL(L^1(G,\omega)_m)}$, then
\begin{align*} 
R_\lambda 
&= \frac{1}{\lambda - \Omega_m} 
= \sum_{n=0}^{\infty} \frac{(\lambda_0-\lambda)^n}{(\lambda_0 - \Omega_m)^{n+1}} 
= \sum_{n=0}^{\infty} (\lambda_0-\lambda)^n R^{n+1}_{\lambda_0},
\end{align*}
and this last series converges absolutely in $\cL(L^1(G,\omega)_m)$, so $R_\lambda \in \cL(L^1(G,\omega)_m)$.
It also follows that when the resolvent operator $R_{\lambda_0}$ is given by convolution with an element of $L^1(G,\omega)_m$, so is $R_\lambda$, and when $R_{\lambda_0}$ is given by convolution with an element of $A$, so is $R_\lambda$.
Then the set $S := \{ z \in \C \setminus \Int \Strip_{2\gamma_m} : r_z \in A \}$ is open in $\C \setminus \Int\Strip_{2\gamma_m}$.
The set $S$ is also closed in $\C \setminus \Int \Strip_{2\gamma_m}$,  since $r_{z_j} \to r_z$ in $L^1(G,\omega)_m$ if $z_j \to z \in \C \setminus \Int \Strip_{2\gamma_m}$ by Lemma \ref{lem:resolvents}, and is symmetric about $0$; hence $S = \C \setminus \Int\Strip_{2\gamma_m}$.

Take $f \in L^1(G,\omega)_m$; we must show that $f \in A$.  
From the Cartan decomposition, $f$ may be approximated arbitrarily closely in $L^1(G,\omega)_m$ by elements $\funz_n$ with compact support, and so it suffices to show that $\funz_n \in A$.
By \eqref{stimastriscia} $\hat \funz_n$ is uniformly bounded on $\Strip_{2\gamma_m}$.
By Lemma \ref{lem:resolvents}, $\funz_n$ can be approximated arbitrarily well in $L^1(G,\omega)_m$ by elements of the form $(\zeta^2 r_\zeta)^{*4}* \funz_n$ by taking $\zeta$ large in $\R$, so it suffices to treat $(\zeta^2 r_\zeta)^{*4} *\funz_n$; it is evident that  $((\zeta^2 r_\zeta)^{*4}* \funz_n)\afterhat (\var)$ vanishes like $(1+|\var|)^{-8}$ at infinity in the strip $\Strip_{2\beta_m}$.
Let $\funzz=(\zeta^2 r_\zeta)^{*4}* \funz_n$. If we can show that $\funzz\in A$, then $f\in A$ by a limiting argument.

The Cauchy integral formula and the evenness of $\hat \funzz$ imply that
\[
\begin{aligned}
\hat \funzz(\var)  
&= \frac{1}{2\pi i} \int_{\Gamma} \hat \funzz(z) \left(\frac{1}{ z - \var} + \frac{1}{z + \var }\right) \,dz \\
&= \frac{1}{\pi i} \int_\Gamma z \hat \funzz(z) \frac{1}{z^2 - \var^2 }  \,dz \\
&= \frac{1}{\pi i} \int_\Gamma z \hat \funzz(z) \, \hat r_{z}(\var)  \,dz 
\qquad\forall s \in \Int \Strip_{2\beta_m};
\end{aligned}
\]
here $\Gamma$ is the contour given by $z = \gamma_m +iy$ where $y$ varies from $-\infty$ to $\infty$ in $\R$.
Hence
\[
\funzz=\frac{1}{\pi i} \int_\Gamma z \hat \funzz(z) \,  r_{z}  \,dz ,
\]
and since $r_z \in A$ and $\norm{r_z}_{L^1(G,\omega)} \lesssim (1 + |\Im z|)^4$ while $|\hat \funzz(z)| \lesssim (1 + |\Im z|)^{-8}$, the last integral
converges as an $A$-valued Bochner integral, and hence  $\funzz \in A$, as required.
\end{proof}

\section{Admissibility of irreducible  representations of $\mathrm{SL}(2,\R)$}

In this section we apply the previous results to study the admissibility of irreducible Banach representations of $G$. We shall prove that an irreducible   representation $\rep$ of $G$ on a Banach $\Ban$ is admissible if one of the nontrivial  Banach subspaces $\Ban_m$ has the ISP.
Coupled with Soergel's result \cite{Soergel88}, this becomes an equivalence.

\begin{thm}
Let $\rep$ be an irreducible representation of $G$ on a Banach space $\Ban$.
Assume that the Banach subspace $\Ban_m$ is nontrivial and has the invariant subspace property for some $m \in \Z$.
Then $\dim(\Ban_{n}) \leq1$ for all $n \in \Z$.
\end{thm}

\begin{proof}
By Lemma  \ref{lem:mod-growth}, there exists $\alpha \in [0,\infty)$ such that 
\[
\| \rho(x) \|_{\mathcal{L}(\Ban)} \leq C \|x\|^\alpha := C \omega(x) 
\qquad\forall x \in G.
\]

With a view to reaching a contradiction, suppose that  $\dim(\Ban_m) > 1$.
By Lemma \ref{lem:irred-on-L1m}, the restriction to $L^1(G,\omega)_m$ of the integrated representation of $L^1(G,\omega)$ acts irreducibly on $\Ban_m$.
By Theorem \ref{thm:generator}, there exists $f_m \in L^1(G,\omega)_m$ that generates $L^1(G,\omega)_m$. 
Let $\Ban_m^\circ$ be a nontrivial closed invariant subspace for $\rep(f_m)$ acting on $\Ban_m$.
Then $\Ban_m^\circ$ is a nontrivial closed invariant subspace for $\rho(L^1(G,\omega)_m)$ on $\Ban_m$, which is our desired contradiction.

Thus $\dim(\Ban_m) = 1$; take $\xi \in \Ban$ such that $\Ban_m = \C \xi$.

Take $n \in \Z \setminus \{m\}$ such that $\Ban_n$ is nontrivial, and $\eta \in \Ban_n \setminus\{0\}$.
Since $\rho$ is irreducible $\rep( C^\infty_c(G))\xi $ is dense in $\Ban$, so there exists $\{g_j\}_j \in C^\infty_c(G)$ such that $\norm{ \rep(g_j) \xi - \eta}_{\Ban}\longrightarrow 0$ as $ j\to \infty$.
Now since $\mathcal{P}_n$ is a bounded projection,
\[
\norm{ \rep(\mu_n * g_j *\mu_m) \xi - \eta  }_{\Ban}
= \norm{ \mathcal{P}_n \rep(g_j) \mathcal{P}_m\xi - \eta }_{\Ban}
= \norm{ \mathcal{P}_n ( \rep(g_j) \xi - \eta ) }_{\Ban}  \longrightarrow 0
\]
as $ j\to \infty$,
so there exists $g\in C^\infty_c(G)$ with the invariance property \eqref{eq:invariance-mn} such that  $\rho(g )\xi\in \Ban_n\setminus\{0\}$. 
By Lemma~\ref{lem:irred-on-L1m} applied to $\Ban_n$, the space $\rep( C^\infty_c(G)_n * g) \xi$ is dense in $\Ban_n$. Moreover, from Theorem \ref{thm:f-star-g}, $C^\infty_c(G)_n * g=g *C^\infty_c(G)_m$. Since $\Ban_m$ is one dimensional,  $\rep(C^\infty_c(G)_m)\xi=\C\xi$. 
This shows that 
$\C\rep(g  )\xi$ is dense in $\Ban_n$ and hence $\dim(\Ban_n) =1$.
\end{proof} 
\color{black}

\begin{cor}
Assume that Hilbert spaces have the ISP. 
Then each irreducible uniformly bounded representation $\rho$ of $G$ on a Hilbert space is equivalent to one of the uniformly bounded representations of Ehrenpreis and Mautner \cite{EMI, EMII, EMIII} and Kunze and Stein \cite{KS60}.
\end{cor}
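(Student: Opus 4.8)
The plan is to deduce the corollary from the theorem just proved (admissibility of irreducible Hilbert representations) together with the classification of the irreducible Harish-Chandra modules of $G$ and the results of our earlier paper \cite{ACD}. First I would observe that a uniformly bounded representation $\rho$ has growth rate $0$: the hypothesis $\sup_{x\in G}\norm{\rho(x)}_{\cL(\Hil)}<\infty$ means that the estimate \eqref{eq:growth} holds with $\alpha=0$, so we may take the weight $\omega$ to be identically $1$ throughout the analysis of Section~\ref{sec:def-prelim}. The preceding theorem then applies verbatim and yields $\dim(\Hil_m)\le 1$ for every $m\in\Z$; in particular $\rho$ is admissible, each $\Hil_m$ coincides with its subspace of smooth vectors, and the algebraic direct sum $\sum_{m\in\Z}\Hil_m$ is an irreducible Harish-Chandra module for the pair $(\frk{g},K)$.

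Next I would invoke Bargmann's classification of the irreducible $(\frk{g},K)$-modules of $G$ \cite{B}: every such module is infinitesimally equivalent to a member of the principal series, a complementary series module, a (limit of) discrete series module, or a finite-dimensional module. A nontrivial finite-dimensional representation of the noncompact simple group $G$ cannot be uniformly bounded, since already the standard action satisfies $\norm{\rho(a_r)}\to\infty$ as $r\to\infty$; thus the finite-dimensional modules are excluded, leaving exactly the infinitesimal classes realized by the uniformly bounded representations of Ehrenpreis--Mautner \cite{EMI, EMII, EMIII} and Kunze--Stein \cite{KS60}.

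Finally I would appeal to \cite{ACD}, whose content is that two irreducible uniformly bounded Hilbert representations of $G$ with isomorphic underlying Harish-Chandra modules are equivalent via a bounded operator with bounded inverse, and that the Kunze--Stein realization is a (minimal) representative of each such similarity class. Applying this to $\rho$ and to the Kunze--Stein model sharing its Harish-Chandra module completes the argument. The genuinely new ingredient supplied here is the admissibility furnished by the preceding theorem; once that is in hand, the remaining steps are the standard Harish-Chandra-module bookkeeping and the similarity result of \cite{ACD}. The main obstacle is therefore not in the present corollary at all but is encapsulated in the appeal to \cite{ACD}, namely the fact that uniform boundedness together with a fixed infinitesimal equivalence class pins the representation down up to similarity.
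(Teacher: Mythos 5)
Your proposal is correct and follows essentially the same route as the paper, whose own proof is just two sentences: admissibility from the preceding theorem plus the classification of admissible representations (citing \cite[Chapter 2]{HT}). You are in fact somewhat more careful than the paper, since you make explicit the final step --- that uniform boundedness together with a fixed infinitesimal equivalence class determines the representation up to bounded similarity, which is the content of \cite{ACD} --- a point the paper states only in its introduction and leaves implicit in the proof itself.
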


\begin{proof}
Our theorem implies that $\rho$ is admissible.
The classification of admissible representations (see, for instance, \cite[Chapter 2]{HT})  then implies the affirmation.
\end{proof}

\section{Final remarks}
We have seen that every irreducible Hilbert space representation of $G$ is admissible, modulo the solution to the invariant subspace problem.
Conversely, by the embedding theorem of Delorme and Souafi  \cite{DS}, every irreducible Harish-Chandra module of every seimsimple Lie group can be globalised to a  Hilbert representation.
Our arguments may extend to other semisimple Lie groups. 
However, the extension seems to require a version of the invariant subspace problem that deals with families of operators, which does not seem to have been considered.

\section{Acknowledgements}
The authors thank Bernhard Kr\"otz for helpful comments.

\end{document}